\documentclass[12pt,leqno,twoside]{article}

\usepackage{amsmath,amsfonts,amssymb,fancyhdr,amsthm,titlesec}

\oddsidemargin.6in
\evensidemargin.6in

\newtheoremstyle{thm}{.5\baselineskip}{.5\baselineskip}{\itshape}{5mm}
{\scshape}{.}{2mm}{}

\theoremstyle{thm}
\newtheorem{theorem}{Theorem}
\newtheorem{corollary}[theorem]{Corollary}
\newtheorem{lemma}[theorem]{Lemma}
\newtheorem{proposition}[theorem]{Proposition}

\newtheoremstyle{rem}{.5\baselineskip}{.5\baselineskip}{}{5mm}
{\scshape}{.}{2mm}{}

\theoremstyle{rem}
\newtheorem{remark}[theorem]{Remark}

\lhead[\thepage]{}
\chead[{{\footnotesize K.E.\ HARE and L.T.\ RAMSEY}}]{\footnotesize
EXACT KRONECKER CONSTANTS OF THREE ELEMENT SETS}{}
\rhead[]{\thepage}
\cfoot[]{}

\renewcommand{\abstractname}{{\bf Abstract.\ }}
\renewenvironment{abstract}{\footnotesize\abstractname}

\makeatletter

\renewcommand{\section}{\@startsection{section}{1}{0pt}{\baselineskip} 
{.5\baselineskip}{\centering\normalfont\bfseries}}

\renewcommand{\subsection}{\@startsection{subsection}{1}{7mm}{\baselineskip}
{-3mm}{\bfseries}}

\renewcommand*{\@seccntformat}[1]{%
  \csname the#1\endcsname.\ }

\makeatother

\begin{document}

\pagestyle{fancy}

\thispagestyle{empty}

\begin{center}
{\bf\large EXACT KRONECKER CONSTANTS OF \\[2mm]
THREE ELEMENT SETS}

\bigskip

{\small KATHRYN E.\ HARE$^1$ and L.\ THOMAS RAMSEY$^2$\footnote{\parindent=6mm{\it Key words and phrases:} Kronecker constant, trigonometric approximation. 

{\it Mathematics Subject Classification:} Primary: 42A10; Secondary: 43A46, 11J71

This work was partially supported by NSERC through application number 44597,  and the Edinburgh Math
Soc. The authors would like to thank St. Andrews University for their
hospitality when some of this research was done.}}

\medskip

{\scriptsize $^1$Department of Pure Mathematics, University of Waterloo, Waterloo, Ont., Canada, N2L 3G1 \\[-2mm]
e-mail: kehare@uwaterloo.ca}

\medskip

{\scriptsize $^2$Department of Mathematics, University of Hawaii at Manoa, Honolulu, Hi, USA, 96822 \\[-2mm]
e-mail: ramsey@math.hawaii.edu}
\end{center}

\bigskip

\begin{quotation}
\begin{abstract}
For any three element set of positive integers, $\{a,b,n\}$, with $a<b<n,$ $%
n $ sufficiently large and $\gcd (a,b)=1$, we find the least $\alpha $ such
that given any real numbers $t_{1},t_{2},t_{3}$ there is a real number $x$
such that 
\begin{equation*}
\max \{\left\langle ax-t_{1}\right\rangle ,\left\langle
bx-t_{2}\right\rangle ,\left\langle nx-t_{3}\right\rangle \}\leq \alpha ,
\end{equation*}%
where $\left\langle \cdot \right\rangle $ denotes the distance to the
nearest integer. The number $\alpha $ is known as the angular Kronecker
constant of $\{a,b,n\}$. We also find the least $\beta $ such that the same
inequality holds with upper bound $\beta $ when we consider only
approximating $t_{1},t_{2},t_{3}$ $\in \{0,1/2\}$, the so-called binary
Kronecker constant. \ The answers are complicated and depend on the
congruence of $n\mod(a+b)$. Surprisingly, the angular and binary
Kronecker constants agree except if $n\equiv a^{2}\mod(a+b)$.
\end{abstract}
\end{quotation}

\section{Introduction}

The classical Kronecker theorem states that if $\{r_{j}\}$ is any finite
collection of rationally independent real numbers, then given any sequence
of real numbers $(t_{j})\subseteq \lbrack 0,1)$ and $\varepsilon >0$ there
are integers $x$ and $(k_{j})$ such that $|r_{j}x-t_{j}-k_{j}|<\varepsilon $
for all \thinspace $j$. This fails to be true if the $\{r_{j}\}$ are
replaced by a finite collection of integers, $\{n_{j}\},$ even allowing $x$
to be any real number. The angular Kronecker constant of the given set of
integers $S=\{n_{j}\}$, denoted $\alpha (S)$, is the infimum of the $%
\varepsilon $ for which such an approximation can be made for every
sequence\ $(t_{j})$. It is obvious that $\alpha (S)\leq 1/2$ for any set $S$
(finite or infinite), and an application of the Baire category theorem shows
that $\alpha (S)<1/2$ for any finite set $S$. Without further knowledge
about the set, this is the best that can be said. Sets for which $\alpha
(S)<1/2$ have been much studied; c.f. \cite{GaH}-\cite{GHK}, \cite{KR}, \cite%
{Va} and the references cited therein.

The case when $\alpha (S)<1/4$ is of particular interest as such sets are
Sidon, meaning that every bounded $S$-function is the restriction of the
Fourier transform of a measure on the circle. In fact, this measure can be
chosen to be discrete and $1/4$ is sharp with this property (\cite{GH1}).
Sidon sets have been thoroughly studied, yet fundamental problems remain
open such as whether every Sidon set is a finite union of sets that have
small Kronecker constants, or whether $\alpha <1/2$ implies $S$ is Sidon.\ 

It could be helpful to know the Kronecker constants of finite sets in
addressing these questions as the Kronecker constant of an infinite set is
the supremum of the constants of its finite subsets. This is a difficult
problem and complete answers are known only for special sets, such as two
element sets \cite{GH1}, three element sets satisfying certain simple
relations \cite{HR1} and geometric sequences of the form $%
\{m^{j}\}_{j=0}^{d} $ for an integer $m$ \cite{HR2}.

In this note we determine the Kronecker constants for all three element sets
of positive integers, $\{a,b,n\},$ where $a$ and $b$ are coprime and $n$ is
sufficiently large. The answers are surprisingly complicated, with different
formulas depending on the congruence of $n\mod (a+b)$. Our proof is
algorithmic. Interestingly, we show that one can find the best approximate
for a given triple of real numbers $(t_{1},t_{2},t_{3})$ by either starting
with the best or the `second best' approximate for the pair $(t_{1},t_{2})$
relative to the two element set $\{a,b\},$ and then, if necessary, making a
slight modification. We call this the `greedy algorithm'; see Section 2 for
more details.

A related problem is to determine how well the set of integers $S$ can
approximate all $\{0,1/2\}$-valued sequences $(t_{j})$. The least $%
\varepsilon $ for this approximation problem is known as the binary
Kronecker constant, $\beta (S)$. Like the angular Kronecker constant, this
constant is also known only in a few special cases such as arithmetic
progressions \cite{HR3}. We calculate the binary Kronecker constants for
three element sets, as well. An unexpected fact is that $\alpha
(a,b,n)=\beta (a,b,n)$ provided $n\not\equiv a^{2}\mod (a+b).$

Here is our main theorem.

\begin{theorem}
\label{T:three} Suppose $a<b<n$ are positive integers with $\gcd (a,b)=1$
and $n$ suitably large. Assume 
\begin{equation*}
aT\equiv 1\mod (a+b),\quad n\equiv r\mod (a+b),\quad \text{and}\quad R\equiv
rT\mod (a+b)
\end{equation*}%
for $r$, $R$ and $T$ $\in \lbrack 0,a+b)$. Then 
\begin{equation*}
\alpha (a,b,n)=%
\begin{cases}
\dfrac{n+a^{2}+ab-aR}{2(a+b)(a+n)} & \text{if $0\leq R<a$} \\ 
\dfrac{n+ab}{2(an+bn+ab)} & \text{if $R=a$} \\ 
\dfrac{n+bR}{2(a+b)(b+n)} & \text{if $a<R\leq 2a$} \\ 
\dfrac{n+2a^{2}+2ab-aR}{2(a+b)(a+n)} & \text{if $R>2a$}%
\end{cases}%
\end{equation*}%
Moreover, $\alpha (a,b,n)=\beta (a,b,n)$ except if $R=a$, when $\alpha
(a,b,n)>\beta (a,b,n)$.
\end{theorem}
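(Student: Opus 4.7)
The plan is to prove the equalities for $\alpha(a,b,n)$ and $\beta(a,b,n)$ by matching lower and upper bounds, with the case analysis driven by the residue $R$. The orbit $\{(ax,bx,nx)\bmod 1 : x\in\mathbb{R}\}$ is a one-parameter subgroup of $\mathbb{T}^3$ whose closure $H$ has character annihilator $\{(k_1,k_2,k_3)\in\mathbb{Z}^3 : k_1 a+k_2 b+k_3 n=0\}$. The hypothesis $\gcd(a,b)=1$, together with the defining congruence $aR\equiv n\pmod{a+b}$, lets me describe $H$ and its Voronoi cell in $\mathbb{T}^3$ explicitly. For the lower bound I would exhibit, in each of the four ranges of $R$, a concrete triple $(t_1,t_2,t_3)\in\mathbb{T}^3$ realizing sup-norm distance from $H$ equal to the claimed formula. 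The denominators $2(a+b)(a+n)$ and $2(a+b)(b+n)$ strongly suggest these extremal triples lie at particular vertices of the Voronoi cell, and computing $\min_x\max\{\langle ax-t_1\rangle,\langle bx-t_2\rangle,\langle nx-t_3\rangle\}$ directly will then yield the right-hand side.

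For the upper bound I would implement the greedy algorithm announced in the introduction. For an arbitrary $(t_1,t_2,t_3)$, the two-element theorem for $\{a,b\}$ (cf.\ \cite{GH1}) produces two natural simultaneous approximates $x_0,x_1\in\mathbb{R}$ of $(t_1,t_2)$, namely the best and the `second best'; they differ by an increment of $1/(a+b)$ along which $ax$ and $bx$ move by $\pm a/(a+b)\bmod 1$ while $nx$ moves by $r/(a+b)\bmod 1$. Consequently the position of $R$ (equivalently, of $r$) relative to the landmarks $\{0,a,2a,a+b\}$ dictates which of $x_0,x_1$ is the appropriate starting approximate and how large a subsequent adjustment must be. In each of the four cases I would verify that after a perturbation of the form $x\mapsto x+\delta/\bigl((a+b)(a+n)\bigr)$ (or the analogous expression with $a+n$ replaced by $b+n$) the three residues simultaneously fall below the claimed $\alpha(a,b,n)$, matching the lower bound.

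For the binary equality when $R\neq a$, I would verify by direct computation that one of the eight triples $(t_1,t_2,t_3)\in\{0,1/2\}^3$ realizes the same lower-bound extremum found above, which gives $\beta(a,b,n)\geq\alpha(a,b,n)$; the reverse inequality is trivial. This is a finite check once the extremal triple is identified, guided by the denominator structure of the angular formula. In the exceptional case $R=a$ I expect the angular extremum to be attained only at a point genuinely interior to $\mathbb{T}^3$, so no binary triple hits it, and the binary constant must be computed separately by running the greedy argument restricted to $(t_1,t_2,t_3)\in\{0,1/2\}^3$; this then yields the strict inequality $\alpha>\beta$ asserted by the theorem.

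The principal obstacle throughout is the meticulous case-by-case bookkeeping: each of the four angular cases, together with the separate binary computation, requires careful tracking of the perturbation size to ensure all three inequalities simultaneously hold. The boundary cases $R=a$ and $R=2a$ sit on the transitions between formulas and will need delicate separate treatment, and the hypothesis `$n$ suitably large' will enter precisely at the points where one must rule out wrap-around on the circle that would otherwise invalidate the chosen perturbation.
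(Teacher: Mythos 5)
Your overall architecture matches the paper's: upper bound by a greedy scheme that perturbs a balanced two-element approximate, lower bound by exhibiting extremal triples, reduction to $\{0,1/2\}^3$ when $R\neq a$, and a separate angular argument when $R=a$. But there are two genuine gaps.

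\textbf{The increment between best and second-best is wrong, and with it the role of $R$ versus $r$.} The balanced approximates for $(t_1,t_2)$ relative to $\{a,b\}$ (those $x$ with $ax-t_1-k_1=-(bx-t_2-k_2)$) are spaced $1/(a+b)$ apart, but the best and \emph{second-best} among them are generally \emph{not} adjacent: they differ by $(g+h)/(a+b)$ where $ag-bh=1$, and $g+h\equiv T\equiv a^{-1}\bmod(a+b)$. Consequently, the shift in $nx\bmod 1$ from best to second-best is $n(g+h)/(a+b)\equiv rT/(a+b)\equiv R/(a+b)\bmod 1$, not $r/(a+b)$ as you wrote. This is exactly why the theorem's case split is driven by the residue $R=rT$ rather than by $r$ directly; your parenthetical ``(equivalently, of $r$)'' is not correct, since passing from $r$ to $R$ is a non-trivial permutation of residues. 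Running your proposed case analysis with $r$ instead of $R$ and with an increment of $1/(a+b)$ would produce the wrong thresholds.

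\textbf{The lower bound is harder than a ``direct computation.''} To certify that a concrete triple $(t_1,t_2,t_3)$ realizes the claimed value, you must solve the inner minimization $\min_x\max\{\langle ax-t_1\rangle,\langle bx-t_2\rangle,\langle nx-t_3\rangle\}$ and rule out every competing $x$. The paper does real work here: it first proves a localization lemma (using the uniqueness-mod-1 structure and the fact that the best three-term approximate must balance two of the three coordinates) showing that the minimizer lies, up to an integer shift, in a specific interval $[y_{z_1},y_{z_2}]$, and only then is the optimization reduced to a finite check; separate localization lemmas are needed for the binary computation and for the $R=a$ angular extremum. Your ``once the extremal triple is identified, this is a finite check'' elides the step that actually requires proof, and the identification of the extremal triple in the $R=a$ case (which is genuinely non-binary) needs that same localization.

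Your Voronoi-cell framing is a pleasant geometric reformulation of the paper's interval picture, and the rest of the outline (toggling down to two binary $(t_1,t_2)$-configurations, greedy perturbation of size governed by $a+n$ or $b+n$, and the strictness $\alpha>\beta$ when $R=a$ because the extremum is interior) is consistent with what the paper actually does. But the two points above must be repaired before the outline can be executed.
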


We note that $R=a$ if and only if $n\equiv a^{2}\mod (a+b)$. We also remark
that if one checks the details of the proof, it can be seen how large $n$
needs to be relative to the size of $a$ and $b$.

We conjecture that if $\gcd (a,b)=m>1$, there will be $(a+b)/m$ cases for $%
\alpha (a,b,n)$, determined by the congruence of $n$ $\mod (a/m+b/m).$

It seems reasonable to expect that with considerably more work the
techniques of this paper could be generalized to sets $S$ of size greater
than $3$, subject to the condition that the largest element is much larger
than the others.

\begin{remark}
In discrete optimization, the closest vector problem is to find, given an
additive subgroup $L\subseteq \mathbb{Z}^{n}$, the distance $d(v,L)=\min
\{\rho (v-k):k\in L\}$ for any $v\in \mathbb{R}^{n}$. This is known to be
NP-hard for both the Euclidean and maximum norms $\rho $ \cite[p. 182]{NW}.

Finding Kronecker constants involves superimposing one additional layer of
optimization. Indeed, if $S=\{n_{1},...,n_{d}\}\subseteq \mathbb{Z}^{d}$,
then $\alpha (S)=\max \{d(v,\mathbb{Z}^{d}):v\in \mathbb{R}^{d}\},$ where $%
\rho (w)=\inf \{\left\Vert w-t(n_{1},...,n_{d})\right\Vert :t\in \mathbb{R\}}
$. To date, the authors do not know the hardness level created by
superimposing the additional level of optimization that is required to
compute Kronecker constants. It is striking that with $S=\{a,b,n\}$ the
Kronecker constant can now be computed instantly for $n$ large and $\gcd
(a,b)=1$, as proved in this paper, but that exact Kronecker constants have
eluded simplification for $n$ relatively small. This is a strange kind of
`hardness', where smaller values of integers give the most difficult cases
to analyze.
\end{remark}

Throughout the paper we will denote by $E_{n}$ the value on the right hand
side of the (claimed) formula for $\alpha (a,b,n)$. In Section 2 we prove
that $\alpha (a,b,n)$ is dominated by the formulas specified above. In
Section 3 we compute $\beta (a,b,n)$, showing that it agrees with these
formulas when $R\neq a$. Since $\alpha (a,b,n)\geq \beta (a,b,n)$, this
proves the equality when $R\neq a$. The proof that $\beta (a,b,n)<\alpha
(a,b,n)=E_{n}$ when $R=a$ is handled directly in the final subsection.

\textbf{Notation and Definitions}: Assume $S=\{n_{j}\}_{j=1}^{d}$ is a set
of $d$ integers with $n_{1}<n_{2}<\cdot \cdot \cdot <n_{d}$. Set $\mathbf{n}%
=(n_{1},\hdots,n_{d})$. We define the approximation cost for $\mathbf{t}%
=(t_{1},...,t_{d})\in \mathbb{R}^{d}$, relative to $S,$ as%
\begin{equation}
\begin{aligned} \mu_S(\mathbf {t})&=\inf \{\,{\Vert \mathbf {t}-x \cdot
\mathbf n+\mathbf k\Vert_\infty}\,:\, {x\in \mathbb R, \mathbf k \in \mathbb
Z^d}\,\}\\ &=\inf\{\,{\Vert \langle \mathbf{t} -x \mathbf n \rangle
\Vert_\infty}\,:\, {x \in \mathbb R}\,\} \end{aligned}.  \label{E:defmu}
\end{equation}%
Here the symbol $\left\langle u\right\rangle $ denotes the distance to the
nearest integer when $u$ is a real number; for real vectors, $\mathbf{u}$,
it denotes the application of $\left\langle \cdot \right\rangle $
component-wise. We omit the writing of the subscript $S$ when the set $S$ is
clear.

With this notation the \textit{angular Kronecker constant} of $S$ is%
\begin{equation*}
\alpha (S)=\sup \{\mu (\mathbf{t}):\mathbf{t}\in \mathbb{R}^{d}\}
\end{equation*}%
and the \textit{binary Kronecker constant} is 
\begin{equation*}
\beta (S)=\sup \{\mu (\mathbf{t}):\mathbf{t}\in \mathbb{\{}0,1/2\}^{d}\}.
\end{equation*}%
As noted in \cite{HR1}, periodicity allows one to limit $x$ to an interval
of length $1$ and the vectors $\mathbf{k}$ to a finite set. Consequently,
inf and sup can be replaced by min and max. A choice of $x$ which minimizes $%
\mu _{S}(\mathbf{t})$ is known as a \textit{best approximate} for $\mathbf{t}
$ relative to $S$.

\section{Upper bounds on the Kronecker constants}

\subsection{Setting up a `greedy' algorithm.}

Throughout the remainder of the paper we assume $a<b<n$ are fixed with $\gcd
(a,b)=1,$ and $n$ is suitably large. When we write $\mu (t_{1},t_{2},t_{3})$
we mean $\mu _{\{a,b,n\}}(t_{1},t_{2},t_{3})$. Recall that $E_{n}$ denotes
the value claimed by Theorem \ref{T:three} for $\alpha (a,b,n)$. It is known
that for all positive integers $a,b$, 
\begin{equation*}
\alpha (a,b,n)\rightarrow \alpha (a,b)=1/(2a+2b)\text{ as }n\rightarrow
\infty .
\end{equation*}%
It is clear that $\lim_{n\rightarrow \infty }E_{n}=$ $1/(2a+2b)$, thus by
taking $n$ sufficiently large we can assume without loss of generality that
both $\alpha (a,b,n)$ and $E_{n}$ are smaller than $1/(a+b)$.

Our strategy will be to start with a best approximate for $(t_{1},t_{2})$,
relative to $\{a,b\}$, and then to modify it slightly to improve the
approximation of $t_{3}$ relative to $n$. In many cases, this suffices. In
other cases a related, `second best' approximate must be suitably modified.
These ideas will be made precise in this section.

In \cite{HR1} it is shown that for any pair of real numbers, $(t_{1},t_{2}),$
there are always a real number $x$ and integers $k_{1},k_{2}$ with the
property 
\begin{equation}
ax-(t_{1}+k_{1})=-(bx-(t_{2}+k_{2})).  \label{oppsign}
\end{equation}%
In fact, there is always a triple, $x,k_{1},k_{2},$ that not only satisfies (%
\ref{oppsign}), but is also a best approximate to $(t_{1},t_{2}),$ by which
we mean that 
\begin{equation*}
\mu _{\{a,b\}}(t_{1},t_{2})=\left\Vert
x(a,b)-(t_{1}+k_{1},t_{2}+k_{2})\right\Vert _{\infty }.
\end{equation*}

Given an $x,$ $k_{1},k_{2}$ satisfying (\ref{oppsign}), we will put 
\begin{equation*}
\lambda _{x}=\left\vert ax-(t_{1}+k_{1})\right\vert .
\end{equation*}%
If it is the case that $\lambda _{x}\leq (b-a)/(2n)$, then choose $z$ such
that $\left\vert nz-nx\right\vert \leq 1$ and $nz\equiv t_{3}\mod 1$, say $%
nz=t_{3}+k_{3}$ for integer $k_{3}$. An easy calculation gives 
\begin{eqnarray*}
\left\vert az-(t_{1}+k_{1})\right\vert &\leq &\left\vert a(z-x)\right\vert
+\left\vert ax-(t_{1}+k_{1})\right\vert \leq \frac{a}{n}+\lambda _{x}\leq 
\frac{b+a}{2n}, \\
\left\vert bz-(t_{2}+k_{2})\right\vert &\leq &\left\vert b(z-x)\right\vert
+\left\vert bx-(t_{2}+k_{2})\right\vert \leq \frac{b}{n}+\lambda _{x}\leq 
\frac{3b-a}{2n},
\end{eqnarray*}%
and 
\begin{equation*}
\left\vert nz-(t_{3}+k_{3})\right\vert =0.
\end{equation*}%
This proves

\begin{lemma}
\label{smalllambda}If $\lambda _{x}\leq (b-a)/(2n)$, then%
\begin{equation*}
\mu (t_{1},t_{2},t_{3})\leq \left\Vert
z(a,b,n)-(t_{1}+k_{1},t_{2}+k_{2},t_{3}+k_{3})\right\Vert _{\infty }\leq 
\frac{3b-a}{2n}.
\end{equation*}
\end{lemma}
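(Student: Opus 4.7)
The plan is to start from the triple $x,k_1,k_2$ furnished by (\ref{oppsign}) and perturb $x$ to a nearby real number $z$ that makes the third coordinate $nz$ land exactly on an integer translate of $t_3$, while the first two coordinates degrade by only a controlled amount.

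The first step is to exploit the fact that the map $z \mapsto nz \bmod 1$ has period $1/n$ and is surjective onto $[0,1)$. Therefore I can pick an integer $k_3$ and a real $z$ satisfying $|n(z-x)| \leq 1$ with $nz = t_3+k_3$, so that the third-coordinate error vanishes exactly. This is the only constructive ingredient in the argument.

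The remaining estimates come from a single application of the triangle inequality on each of the first two coordinates, using the hypothesis $\lambda_x \leq (b-a)/(2n)$ and the identity (\ref{oppsign}), which in particular forces $|bx-(t_2+k_2)| = \lambda_x$:
\begin{align*}
|az-(t_1+k_1)| &\leq a|z-x| + \lambda_x \leq \tfrac{a}{n} + \tfrac{b-a}{2n} = \tfrac{a+b}{2n},\\
|bz-(t_2+k_2)| &\leq b|z-x| + \lambda_x \leq \tfrac{b}{n} + \tfrac{b-a}{2n} = \tfrac{3b-a}{2n}.
\end{align*}
Since $b>a>0$ one has $(3b-a)/(2n) > (a+b)/(2n) > 0$, so taking the $\ell^\infty$-norm over the three coordinates yields $\mu(t_1,t_2,t_3) \leq (3b-a)/(2n)$, which is the stated conclusion.

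No step is a genuine obstacle here: the lemma is essentially immediate from the construction of $z$. The only point worth flagging is that the threshold $(b-a)/(2n)$ in the hypothesis is calibrated precisely so that, after absorbing the drift contributions $a/n$ and $b/n$ coming from replacing $x$ by $z$, the resulting coordinate errors have the clean form $(a+b)/(2n)$ and $(3b-a)/(2n)$; the larger of these, $(3b-a)/(2n)$, controls the $\ell^\infty$ error.
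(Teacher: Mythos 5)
Your proof is correct and is essentially identical to the paper's: both choose $z$ with $nz = t_3 + k_3$ and $|n(z-x)|\le 1$, then apply the triangle inequality on the first two coordinates to obtain the bounds $(a+b)/(2n)$ and $(3b-a)/(2n)$.
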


For sufficiently large $n$, $(3b-a)/(2n)\leq E_{n}$, thus our interest is
(primarily) with those $x$ such that $\lambda _{x}>(b-a)/(2n)$.

Next, we will explain what we mean by \textquotedblleft modify it slightly
to improve the approximation of $t_{3}$ relative to $n$\textquotedblright .
Take any a real number $E\geq \lambda _{x}$, and suppose real number $z$ and
integer $k_{3}$ have been chosen satisfying 
\begin{equation*}
nz=t_{3}+k_{3}\text{ and }\left\vert nz-nx\right\vert \leq 1.
\end{equation*}%
If $\left\vert nz-nx\right\vert \leq \lambda _{x}$ we will not modify $x$.
Otherwise, our strategy will be to replace $x$ by a nearby point, $x\pm
\delta $, to get a better approximate. This strategy will culminate with the
bounds of Corollaries \ref{strategy} and \ref{small}.

Case 1: \thinspace $ax-(t_{1}+k_{1})\geq 0$. First, suppose $z\leq x$. We
will replace $x$ by $x-\delta $ for a suitably small $\delta >0$. This will
have the effect of bringing down the size of the third component of%
\begin{equation*}
\left\Vert (x-\delta
)(a,b,n)-(t_{1}+k_{1},t_{2}+k_{2},t_{3}+k_{3})\right\Vert _{\infty }.
\end{equation*}%
However, we will pay a cost for this: the size of the second component will
increase. We will permit only enough of an increase to balance these
approximations of $t_{2}$ and $t_{3}$. Thus $0<\delta <x-z$ will be chosen
so that%
\begin{equation*}
\left\vert (x-\delta )n-(t_{3}+k_{3})\right\vert =\left\vert (x-\delta
)b-(t_{2}+k_{2})\right\vert .
\end{equation*}%
Recalling that $nz=t_{3}+k_{3}$ and $xb-(t_{2}+k_{2})=-\lambda _{x}$, this
gives 
\begin{equation}
\delta =\frac{\left\vert nx-nz\right\vert -\lambda _{x}}{b+n}.  \label{delta}
\end{equation}

Since $ax-(t_{1}+k_{1})=\lambda _{x}>0$, it is clear that for small $\delta
>0,$ 
\begin{equation*}
\left\vert (x-\delta )a-(t_{1}+k_{1})\right\vert \leq
ax-(t_{1}+k_{1})=\lambda _{x}=\left\vert xb-(t_{2}+k_{2})\right\vert .
\end{equation*}%
As $\left\vert (x-\delta )a-(t_{1}+k_{1})\right\vert $ changes more slowly
than $\left\vert (x-\delta )b-(t_{2}+k_{2})\right\vert ,$ the inequality
above actually holds for all $\delta >0$, consequently, 
\begin{equation*}
\begin{aligned} &\left\Vert (x-\delta
)(a,b,n)-(t_{1}+k_{1},t_{2}+k_{2},t_{3}+k_{3})\right\Vert _{\infty }
=\left\vert (x-\delta )b-(t_{2}+k_{2})\right\vert \\ &\quad =\left\vert
(x-\delta )n-(t_{3}+k_{3})\right\vert =\lambda _{x}+b\left( \frac{\left\vert
nx-nz\right\vert -\lambda _{x}}{b+n}\right) \label{z<x} \end{aligned}.
\end{equation*}%
This shows that if, in addition to the assumption $z\leq x$, $z$ satisfies
the requirement 
\begin{equation*}
\lambda _{x}+b\left( \frac{\left\vert nx-nz\right\vert -\lambda _{x}}{b+n}%
\right) \leq E,
\end{equation*}%
equivalently,%
\begin{equation}
z\geq x+\frac{n\lambda _{x}-(b+n)E}{bn},  \label{sizez<x}
\end{equation}%
then \ 
\begin{equation}
\left\Vert (x-\delta
)(a,b,n)-(t_{1}+k_{1},t_{2}+k_{2},t_{3}+k_{3})\right\Vert _{\infty }\leq E.
\label{normz<x}
\end{equation}%
Of course, (\ref{normz<x}) implies $\mu (t_{1},t_{2},t_{3})\leq E$.

Otherwise, $z>x$. In this case, we replace $x$ by $x+\delta $, where $\delta
>0$ is chosen to balance the cost of approximating $t_{1}$ and $t_{3}$. This
means we choose $\delta <z-x$ such that 
\begin{equation*}
\left\vert (x+\delta )n-(t_{3}+k_{3})\right\vert =(x+\delta )a-(t_{1}+k_{1}),
\end{equation*}%
in other words, 
\begin{equation*}
\delta =\frac{\left\vert nx-nz\right\vert -\lambda _{x}}{a+n}.
\end{equation*}%
If $(x+\delta )b\leq t_{2}+k_{2}$, then clearly 
\begin{equation*}
\left\vert (x+\delta )b-(t_{2}+k_{2})\right\vert \leq \left\vert
xb-(t_{2}+k_{2})\right\vert \leq (x+\delta )a-(t_{1}+k_{1}).
\end{equation*}%
But even when $(x+\delta )b>t_{2}+k_{2}$, we will still have the bound 
\begin{equation*}
\left\vert (x+\delta )b-(t_{2}+k_{2})\right\vert \leq (x+\delta
)a-(t_{1}+k_{1}),
\end{equation*}%
provided%
\begin{equation*}
\delta \leq \frac{xa-(t_{1}+k_{1})-(xb-(t_{2}+k_{2}))}{b-a}=\frac{2\lambda
_{x}}{b-a}\text{.}
\end{equation*}%
But $\delta \leq z-x\leq 1/n$, and by assumption $1/n<2\lambda _{x}/(b-a)$,
hence this condition is automatically satisfied. Therefore 
\begin{eqnarray*}
\left\Vert (x+\delta
)(a,b,n)-(t_{1}+k_{1},t_{2}+k_{2},t_{3}+k_{3})\right\Vert _{\infty }
&=&\left\vert (x+\delta )a-(t_{1}+k_{1})\right\vert \\
&=&\lambda _{x}+a\left( \frac{\left\vert nx-nz\right\vert -\lambda _{x}}{a+n}%
\right) .
\end{eqnarray*}%
The same reasoning as in the first case shows that if, in addition to the
assumption that $z>x$,%
\begin{equation}
z\leq x+\frac{(a+n)E-n\lambda _{x}}{an},  \label{sizez>x}
\end{equation}%
then 
\begin{equation}
\left\Vert (x-\delta
)(a,b,n)-(t_{1}+k_{1},t_{2}+k_{2},t_{3}+k_{3})\right\Vert _{\infty }\leq E.
\label{normz>x}
\end{equation}%
Thus again $\mu (t_{1},t_{2},t_{3})\leq E$.\smallskip

Case 2: $ax-(t_{1}+k_{1})<0$. Then $\lambda _{x}=$ $-\left(
ax-(t_{1}+k_{1})\right) =bx-(t_{2}+k_{2})$.

If $z\leq x,$ we choose $0<$ $\delta <x-z$ so that \ $\left\vert (x-\delta
)n-(t_{3}+k_{3})\right\vert =\left\vert (x-\delta
)a-(t_{1}+k_{1})\right\vert $. The assumption $\lambda _{x}\geq (b-a)/(2n)$
\ ensures that also 
\begin{equation*}
\left\vert (x-\delta )b-(t_{2}+k_{2})\right\vert \leq \left\vert (x-\delta
)a-(t_{1}+k_{1})\right\vert ,
\end{equation*}%
thus it follows that if 
\begin{equation}
x\geq z\geq x+\frac{n\lambda _{x}-(a+n)E}{an},  \label{sizebz<x}
\end{equation}%
then 
\begin{equation}
\left\Vert (x-\delta
)(a,b,n)-(t_{1}+k_{1},t_{2}+k_{2},t_{3}+k_{3})\right\Vert _{\infty }\leq E.
\label{normbz<x}
\end{equation}

If $z\geq x$, we choose $0<$ $\delta <z-x$ so that \ $\left\vert (x+\delta
)n-(t_{3}+k_{3})\right\vert =\left\vert (x+\delta
)b-(t_{2}+k_{2})\right\vert $, and again one can check that if%
\begin{equation}
x\leq z\leq x+\frac{(b+n)E-n\lambda _{x}}{bn},  \label{sizebz>x}
\end{equation}%
then 
\begin{equation}
\left\Vert (x+\delta
)(a,b,n)-(t_{1}+k_{1},t_{2}+k_{2},t_{3}+k_{3})\right\Vert _{\infty }\leq E.
\label{normbz>x}
\end{equation}

These are the key ideas behind the next lemma.

\begin{lemma}
\label{mainprelim}Suppose there exist $x\in \mathbb{R}$ and integers $%
k_{1},k_{2}$ such that 
\begin{equation*}
ax-(t_{1}+k_{1})=-(bx-(t_{2}+k_{2})).
\end{equation*}%
Let $\left\vert ax-(t_{1}+k_{1})\right\vert =\lambda _{x}$ and for $E\geq
\lambda _{x}$, put 
\begin{equation*}
z_{1}(E,x)=x+\frac{n\lambda _{x}-(b+n)E}{bn}\text{, }z_{2}(E,x)=x+\frac{%
(a+n)E-n\lambda _{x}}{an},
\end{equation*}%
\begin{equation*}
z_{3}(E,x)=x+\frac{n\lambda _{x}-(a+n)E}{an}\text{, }z_{4}(E,x)=x+\frac{%
(b+n)E-n\lambda _{x}}{bn}.
\end{equation*}%
Assume $\lambda _{x}>(b-a)/(2n)$. Then $z_{1}(E,x)\leq z_{2}(E,x)$ and $%
z_{3}(E,x)\leq z_{4}(E,x)$. Furthermore,

(i) If $ax-(t_{1}+k_{1})=\lambda _{x}$ and there exists $z\in \lbrack
z_{1}(E,x),z_{2}(E,x)]$ such that $nz\equiv t_{3}\mod 1$, then $\mu
(t_{1},t_{2},t_{3})\leq E$.

(ii) If $-\left( ax-(t_{1}+k_{1})\right) =\lambda _{x}$ and there exists $%
z\in \lbrack z_{3}(E,x),z_{4}(E,x)]$ such that $nz\equiv t_{3}\mod 1$, then $%
\mu (t_{1},t_{2},t_{3})\leq E$.
\end{lemma}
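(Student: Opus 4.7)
The plan is to read Lemma~\ref{mainprelim} as a clean packaging of the case analysis already carried out in the paragraphs preceding its statement. All the serious work is done; what remains is to (a) verify that the intervals $[z_1(E,x),z_2(E,x)]$ and $[z_3(E,x),z_4(E,x)]$ are non-degenerate, and (b) match the two sub-cases ``$z\leq x$'' and ``$z>x$'' to the correct endpoint constraint in each of (i) and (ii).

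For (a), I would compute directly
\[
z_2(E,x)-z_1(E,x)=\frac{(2ab+(a+b)n)E-(a+b)n\lambda_{x}}{abn},
\]
which is non-negative because $E\geq \lambda_x$ and the coefficient of $E$ in the numerator is strictly larger than that of $\lambda_x$. The identical manipulation gives $z_3(E,x)\leq z_4(E,x)$. In fact the same hypothesis $E\geq \lambda_x$ combined with the elementary bounds $n/(a+n)<1$ and $n/(b+n)<1$ yields $z_1(E,x)\leq x\leq z_2(E,x)$ and $z_3(E,x)\leq x\leq z_4(E,x)$, putting $x$ inside both intervals; this is exactly what permits the split by the sign of $z-x$.

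For part (i) we are in Case~1 of the preceding discussion. Given $z\in[z_1(E,x),z_2(E,x)]$ with $nz\equiv t_3 \mod 1$, write $nz=t_3+k_3$ with $k_3\in\mathbb{Z}$. If $z\leq x$, the hypothesis $z\geq z_1(E,x)$ is precisely inequality~(\ref{sizez<x}), and the bound (\ref{normz<x}) gives $\mu(t_1,t_2,t_3)\leq E$. If instead $z>x$, then $z\leq z_2(E,x)$ is precisely (\ref{sizez>x}), and (\ref{normz>x}) again yields $\mu(t_1,t_2,t_3)\leq E$. Part (ii) is handled identically via Case~2: for $z\leq x$ invoke (\ref{sizebz<x})--(\ref{normbz<x}) with $z_3$, and for $z\geq x$ invoke (\ref{sizebz>x})--(\ref{normbz>x}) with $z_4$.

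The only mild obstacle I foresee is bookkeeping: one must verify that the four ``size'' inequalities (\ref{sizez<x}), (\ref{sizez>x}), (\ref{sizebz<x}), (\ref{sizebz>x}) really are derived under exactly the sign/direction hypotheses corresponding to the endpoints $z_1,\ldots,z_4$. The standing hypothesis $\lambda_x>(b-a)/(2n)$ is what closes this loop, because it is precisely what was used in the setup (via $\delta\leq |z-x|\leq 1/n$) to bound the ``other'' component ($bx-(t_2+k_2)$ in Case~1 and $ax-(t_1+k_1)$ in Case~2) by the balanced pair. Once this matching is confirmed, no further calculation is required.
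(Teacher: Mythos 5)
Your overall plan is the same as the paper's: treat the lemma as a packaging of the case analysis in the preamble, check the interval containments $z_1(E,x)\leq x\leq z_2(E,x)$ and $z_3(E,x)\leq x\leq z_4(E,x)$, and then dispatch each subcase to one of (\ref{sizez<x})--(\ref{normbz>x}). That matching is right, and your verification that the intervals are non-degenerate is correct.

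There is, however, a genuine gap in the step you flag as ``mild bookkeeping.'' The preamble's derivation of the bounds (\ref{normz>x}) and (\ref{normbz>x}) is carried out under the standing assumption $|nz-nx|\leq 1$; concretely, the inequality $\delta\leq 2\lambda_x/(b-a)$ needed to keep the third component dominated is justified via $\delta\leq z-x\leq 1/n < 2\lambda_x/(b-a)$. You attribute the chain $\delta\leq|z-x|\leq 1/n$ entirely to the hypothesis $\lambda_x>(b-a)/(2n)$, but that hypothesis only supplies the final inequality $1/n < 2\lambda_x/(b-a)$; it does nothing to ensure $|z-x|\leq 1/n$. The lemma's hypothesis is merely $z\in[z_1(E,x),z_2(E,x)]$ with $nz\equiv t_3\bmod 1$, and for a general $E\geq\lambda_x$ this interval can be substantially longer than $2/n$, so a given $z$ in it need not satisfy $|nz-nx|\leq 1$. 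The paper resolves this by an explicit preliminary observation: since $x$ itself lies in the interval, one may replace $z$ by a translate $z'=z\pm m/n$ (still $\equiv t_3\bmod 1$, still in the same interval) with $|nz'-nx|\leq 1$, and then run the preamble's argument with $z'$. Without this replacement step, the argument for the $z>x$ direction of part (i) (and the analogous direction in part (ii)) does not go through as written. Adding that one observation closes the gap and makes your proof essentially identical to the paper's.
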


\begin{proof}
As $E\geq \lambda _{x}$, we have $z_{1}(E,x)\leq z_{2}(E,x)$ and $%
z_{3}(E,x)\leq z_{4}(E,x)$.

We note that if there is some $z\in \lbrack z_{1}(E,x),z_{2}(E,x)]$ (or $%
z\in \lbrack z_{3}(E,x),z_{4}(E,x)])$ such that $nz\equiv t_{3}\mod 1$, then
there is a possibly different choice of $z,$ belonging to the same interval,
still satisfying $nz\equiv t_{3}\mod 1,$ and having the additional property
that $\left\vert nx-nz\right\vert \leq 1$.\ We will work with such a $z$.

First, suppose $ax-(t_{1}+k_{1})=\lambda _{x}$. If $z_{1}(E,x)\leq z\leq x$,
then it follows from (\ref{sizez<x}) and (\ref{normz<x}) (with the $\delta $
described in (\ref{delta})) that 
\begin{equation*}
\mu (t_{1},t_{2},t_{3})\leq \left\Vert (x-\delta
)(a,b,n)-(t_{1}+k_{1},t_{2}+k_{2},t_{3}+k_{3})\right\Vert _{\infty }\leq E.
\end{equation*}

If, instead, $x\leq z\leq z_{2}(E,x)$, we appeal to (\ref{sizez>x}) and (\ref%
{normz>x}) to deduce that 
\begin{equation*}
\mu (t_{1},t_{2},t_{3})\leq \left\Vert (x+\delta
)(a,b,n)-(t_{1}+k_{1},t_{2}+k_{2},t_{3}+k_{3})\right\Vert _{\infty }\leq E.
\end{equation*}

If $-(ax-(t_{1}+k_{1}))=\lambda _{x}$, we similarly call upon (\ref{sizebz<x}%
), (\ref{normbz<x}), (\ref{sizebz>x}) and (\ref{normbz>x}).
\end{proof}

\begin{remark}
\label{modified}Observe that the proof shows that if $E=\mu
(t_{1},t_{2},t_{3})$, then one of $x\pm \delta $ is a best approximate to $%
(t_{1},t_{2},t_{3})$ relative to $\{a,b,n\}$.
\end{remark}

\begin{corollary}
\label{strategy}For all $t_{1},t_{2},t_{3}$ we have 
\begin{equation*}
\mu (t_{1},t_{2},t_{3})\leq \max \left( \frac{n(a+b)\mu
_{\{a,b\}}(t_{1},t_{2})+ab}{2ab+an+bn},\frac{3b-a}{2n}\right) .
\end{equation*}
\end{corollary}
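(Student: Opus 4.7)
The plan is to split on the size of $\lambda_x$ where $x$ is drawn from a best approximate to $(t_1,t_2)$ satisfying (\ref{oppsign}), so that $\lambda_x=\mu_{\{a,b\}}(t_1,t_2)$. If $\lambda_x\le (b-a)/(2n)$, Lemma \ref{smalllambda} immediately gives $\mu(t_1,t_2,t_3)\le (3b-a)/(2n)$, which is dominated by the second term in the maximum. So the interesting case is $\lambda_x>(b-a)/(2n)$, and the goal there is to invoke Lemma \ref{mainprelim} with
\[
E=\frac{n(a+b)\lambda_x+ab}{2ab+an+bn}.
\]

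The key computation is to verify that for this choice of $E$ the relevant interval (either $[z_1(E,x),z_2(E,x)]$ or $[z_3(E,x),z_4(E,x)]$, according to the sign of $ax-(t_1+k_1)$) has length at least $1/n$, because then it must contain some $z$ with $nz\equiv t_3\bmod 1$, and Lemma \ref{mainprelim} closes the argument. A direct expansion gives
\[
z_2(E,x)-z_1(E,x)=z_4(E,x)-z_3(E,x)=\frac{E(2ab+an+bn)-n(a+b)\lambda_x}{abn},
\]
so the length is $\ge 1/n$ precisely when $E\ge \bigl(n(a+b)\lambda_x+ab\bigr)/(2ab+an+bn)$, which is our chosen $E$ with equality.

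It remains only to check the hypothesis $E\ge \lambda_x$ needed by Lemma \ref{mainprelim}; this reduces algebraically to $\lambda_x\le 1/2$, which holds since $\lambda_x=\mu_{\{a,b\}}(t_1,t_2)\le\alpha(\{a,b\})=1/(2a+2b)$. Once these pieces are in place, the conclusion $\mu(t_1,t_2,t_3)\le E$ is just Lemma \ref{mainprelim}, and $E$ is exactly the first term in the claimed maximum. The main technical obstacle is really only the length computation above: everything else is either a direct appeal to previously established results or an arithmetic check. No step requires a nontrivial new idea beyond the observation that both candidate intervals have identical length, so the same threshold $E$ works in both the positive-$\lambda_x$ and negative-$\lambda_x$ sub-cases.
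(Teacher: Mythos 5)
Your proposal is correct and follows the same route as the paper: split on the size of $\lambda_x$ against $(b-a)/(2n)$, apply Lemma \ref{smalllambda} in the small case, and in the large case choose $E=\bigl(n(a+b)\lambda_x+ab\bigr)/(2ab+an+bn)$, verify $E\ge\lambda_x$, show the relevant $z$-interval has length $1/n$, and apply Lemma \ref{mainprelim}. The length computation and the reduction of $E\ge\lambda_x$ to $\lambda_x\le 1/2$ match the paper's argument essentially verbatim.
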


\begin{proof}
Choose $x$ and integers $k_{1},k_{2}$ such that $\mu
_{\{a,b\}}(t_{1},t_{2})=\left\vert ax-(t_{1}+k_{1})\right\vert =\left\vert
bx-(t_{2}+k_{2})\right\vert =\lambda _{x}$. As shown in Lemma \ref%
{smalllambda}, if $\lambda _{x}\leq (b-a)/(2n)$, then 
\begin{equation*}
\mu (t_{1},t_{2},t_{3})\leq (3b-a)/(2n),
\end{equation*}%
so assume otherwise. Set 
\begin{equation*}
E=\frac{n(a+b)\lambda _{x}+ab}{2ab+an+bn}=\lambda _{x}+\frac{ab(1-2\lambda
_{x})}{2ab+an+bn}.
\end{equation*}%
As $\mu _{\{a,b\}}(t_{1},t_{2})<1/2$, we have $E>\lambda _{x}$, so we may
apply Lemma \ref{mainprelim}. In particular, the intervals $%
[z_{1}(E,x),z_{2}(E,x)]$ and $[z_{3}(E,x),z_{4}(E,x)]$ of Lemma \ref%
{mainprelim} are well defined and they have the same length:%
\begin{equation*}
\frac{(a+n)E-n\lambda _{x}}{an}+\frac{(b+n)E-n\lambda _{x}}{bn}=E\left( 
\frac{2ab+an+bn}{abn}\right) -\lambda _{x}\left( \frac{a+b}{ab}\right) =%
\frac{1}{n}\text{.}
\end{equation*}%
Hence both intervals contain some $z$ with $nz\equiv t_{3}\mod 1.$ Now apply
the appropriate part of Lemma \ref{mainprelim}.
\end{proof}

\begin{remark}
\label{length}We point out that the calculations above show that if for some 
$\lambda _{x}\in (0,1/2)$ we let%
\begin{equation*}
E=\frac{n(a+b)\lambda _{x}+ab}{2ab+an+bn}
\end{equation*}%
and $z_{j}=z_{j}(E,x)$ for $j=1,2,3,4$, then the intervals $[z_{1},z_{2}]$
and $[z_{3},z_{4}]$ both have length $1/n$.
\end{remark}

Another important quantity for us will be 
\begin{equation}
L_{n}=\frac{n+ab}{2(an+bn+ab)}.  \label{Ln}
\end{equation}%
When $n\equiv a^{2}\mod(a+b)$, then $L_{n}=E_{n}$. \ For all $n$, $L_{n}\leq
E_{n}$ and clearly $L_{n}>1/(2a+2b)$. The significance of $L_{n}$ is that if 
$\lambda =$ $1/(a+b)-L_{n}$, then%
\begin{equation*}
L_{n}=\frac{n(a+b)\lambda +ab}{2ab+an+bn}.
\end{equation*}%
Thus a consequence of the previous corollary is

\begin{corollary}
\label{small}For $n$ large enough, if $\mu _{\{a,b\}}(t_{1},t_{2})\leq
1/(a+b)-L_{n}$, then $\mu (t_{1},t_{2},t_{3})\leq L_{n}$.
\end{corollary}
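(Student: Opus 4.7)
The plan is to read off this corollary directly from Corollary \ref{strategy} together with the algebraic identity highlighted in Remark \ref{length}. Concretely, set $\lambda = 1/(a+b) - L_n$ and observe that, by the discussion preceding the statement,
\begin{equation*}
L_n = \frac{n(a+b)\lambda + ab}{2ab+an+bn}.
\end{equation*}
Since the map $s \mapsto \dfrac{n(a+b)s + ab}{2ab+an+bn}$ is strictly increasing, the hypothesis $\mu_{\{a,b\}}(t_1,t_2) \le \lambda$ immediately gives
\begin{equation*}
\frac{n(a+b)\mu_{\{a,b\}}(t_1,t_2) + ab}{2ab+an+bn} \le L_n.
\end{equation*}

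Next I would dispose of the other entry in the maximum appearing in Corollary \ref{strategy}. Since $L_n \to 1/(2a+2b) > 0$ as $n \to \infty$ while $(3b-a)/(2n) \to 0$, there is an $n$ depending only on $a,b$ beyond which $(3b-a)/(2n) \le L_n$. This is precisely the sort of qualitative largeness already assumed throughout the section, so it imposes no new condition beyond what has already been invoked.

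Combining the two bounds, Corollary \ref{strategy} yields
\begin{equation*}
\mu(t_1,t_2,t_3) \le \max\left( \frac{n(a+b)\mu_{\{a,b\}}(t_1,t_2)+ab}{2ab+an+bn},\ \frac{3b-a}{2n}\right) \le L_n,
\end{equation*}
which is the claim. There is no real obstacle here: the only point requiring any thought is the choice of $\lambda$ that makes the first term in the maximum equal to exactly $L_n$, but that choice has already been singled out in the paragraph preceding the statement, so the corollary is essentially a packaging of Corollary \ref{strategy} with the correct value of $\lambda_x$ substituted in.
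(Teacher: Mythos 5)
Your proof is correct and is essentially the paper's own argument spelled out in full: the paper's one-line proof invokes Corollary \ref{strategy} together with the observation that $(3b-a)/(2n)\leq L_n$ for large $n$, relying on the identity $L_n=\frac{n(a+b)\lambda+ab}{2ab+an+bn}$ for $\lambda=1/(a+b)-L_n$ stated immediately before the corollary. The only thing you add is the (correct but trivial) remark that the map $s\mapsto\frac{n(a+b)s+ab}{2ab+an+bn}$ is increasing, which the paper leaves implicit.
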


\begin{proof}
Observe that for large enough $n$, $(3b-a)/2n\leq L_{n}$ and apply the
previous corollary.
\end{proof}

\subsection{The `second best' point.}

Since $L_{n}\leq E_{n}$, the greedy algorthim, as described in the previous
subsection, establishes that $\mu (t_{1},t_{2},t_{3})\leq E_{n}$ for any
pair $(t_{1},t_{2})$ such that $\mu _{\{a,b\}}(t_{1},t_{2})\leq
1/(a+b)-L_{n} $. For other $(t_{1},t_{2})$ we will make use of a `second
best' point, which can also be naturally constructed, as explained in the
next lemma.

\begin{lemma}
\label{secondbest} There are a real number $x^{\prime }$ and integers $%
k_{1}^{\prime },k_{2}^{\prime }$ such that $ax^{\prime
}-(t_{1}+k_{1}^{\prime })=-(bx^{\prime }-(t_{2}+k_{2}^{\prime }))$ and%
\begin{equation*}
\left\Vert x^{\prime }(a,b)-(t_{1}+k_{1}^{\prime },t_{2}+k_{2}^{\prime
})\right\Vert _{\infty }=\frac{1}{a+b}-\mu _{\{a,b\}}(t_{1},t_{2}).
\end{equation*}
\end{lemma}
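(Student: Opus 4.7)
The plan is to parametrize every triple $(x',k_1',k_2')$ satisfying the opposite-sign equation (\ref{oppsign}) and to show that the associated value $ax'-(t_1+k_1')$ ranges over a fixed orbit of $a+b$ equally spaced points on $\mathbb{R}/\mathbb{Z}$; the second-nearest such value to $0$ then yields the desired ``second best'' point.

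First I would observe that (\ref{oppsign}) is equivalent to $(a+b)x'=t_1+t_2+k_1'+k_2'$, so every solution has the form $x'=(t_1+t_2+k)/(a+b)$ for some $k\in\mathbb{Z}$, and conversely every such $x'$ admits valid splittings $k=k_1'+k_2'$ with $k_1',k_2'\in\mathbb{Z}$. A direct computation with this $x'$ gives
\[
ax' - (t_1+k_1') \;=\; \frac{ak-(a+b)k_1'-c}{a+b}, \qquad c:=bt_1-at_2.
\]
Since $\gcd(a,a+b)=\gcd(a,b)=1$, the set $\{ak-(a+b)k_1':k,k_1'\in\mathbb{Z}\}$ is all of $\mathbb{Z}$. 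Hence, as the triple $(x',k_1',k_2')$ varies over all solutions of (\ref{oppsign}), the value $ax'-(t_1+k_1')$ reduced modulo $1$ ranges over the orbit
\[
\Omega \;=\; \bigl\{(j-c)/(a+b) \bmod 1 : j\in\mathbb{Z}\bigr\},
\]
which consists of $a+b$ points on $\mathbb{R}/\mathbb{Z}$ evenly spaced with gap $1/(a+b)$.

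Next I would invoke the fact, recalled in the text just before (\ref{oppsign}), that $\lambda:=\mu_{\{a,b\}}(t_1,t_2)$ is attained by some triple satisfying (\ref{oppsign}), together with the fact that under (\ref{oppsign}) one has $\|x(a,b)-(t_1+k_1,t_2+k_2)\|_\infty=|ax-(t_1+k_1)|$. This identifies $\lambda$ with the distance on $\mathbb{R}/\mathbb{Z}$ from $0$ to the closest point of the orbit $\Omega$. Because $\lambda\le\alpha(a,b)=1/(2(a+b))$, the nearest point of $\Omega$ lies within $1/(2(a+b))$ of $0$.

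Finally, a one-line geometric remark finishes the proof: in any arithmetic progression of $a+b$ points on the circle with gap $1/(a+b)$, if the nearest point to $0$ is at distance $\lambda\le 1/(2(a+b))$ on one side, then the next-nearest point lies on the opposite side at distance exactly $1/(a+b)-\lambda$. Choose $x',k_1',k_2'$ satisfying (\ref{oppsign}) and realizing this second-nearest orbit point; the corresponding $|ax'-(t_1+k_1')|$ equals $1/(a+b)-\lambda$, which is the required identity. (In the boundary case $\lambda=1/(2(a+b))$ the two nearest orbit points coincide in distance, and the best approximate of (\ref{oppsign}) itself serves as $x'$.) I do not foresee a serious obstacle; the argument is essentially the index bookkeeping in the second paragraph, and the evenly spaced orbit observation, both of which follow from $\gcd(a,a+b)=1$.
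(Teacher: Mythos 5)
Your proposal is correct and follows essentially the same route as the paper's proof: both arguments rest on the observation that, over all triples $(x',k_1',k_2')$ satisfying (\ref{oppsign}), the signed quantity $ax'-(t_1+k_1')$ ranges over an arithmetic progression with gap $1/(a+b)$ (a fact that uses $\gcd(a,b)=1$), so the second-smallest achievable $\lvert ax'-(t_1+k_1')\rvert$ is $1/(a+b)-\mu_{\{a,b\}}(t_1,t_2)$. The only difference is presentational: the paper makes the construction explicit by choosing $g,h$ with $ag-bh=1$ and setting $k_1'=k_1-h$, $k_2'=k_2-g$, whereas you argue the existence of the required shift abstractly via the surjectivity of $(k,k_1')\mapsto ak-(a+b)k_1'$ onto $\mathbb{Z}$.
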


\begin{proof}
Choose a best approximate $x$ and integers $k_{1},k_{2}$ such that 
\begin{equation*}
\left\Vert x(a,b)-(t_{1}+k_{1},t_{2}+k_{2})\right\Vert _{\infty }=\mu
_{\{a,b\}}(t_{1},t_{2}).
\end{equation*}%
In particular, $ax-(t_{1}+k_{1})=-(bx-(t_{2}+k_{2}))$, consequently, 
\begin{equation*}
x=\frac{t_{1}+k_{1}+t_{2}+k_{2}}{a+b}
\end{equation*}%
and 
\begin{equation}
\left\vert ax-(t_{1}+k_{1})\right\vert =\left\vert \frac{%
at_{2}-bt_{1}-(bk_{1}-ak_{2})}{a+b}\right\vert .  \label{M1}
\end{equation}

This quantity is minimized when we make either the choice $%
bk_{1}-ak_{2}=\lfloor at_{2}-bt_{1}\rfloor $ or the choice $%
bk_{1}-ak_{2}=\lfloor at_{2}-bt_{1}\rfloor $ $+1$, depending on which gives
the lesser value. (Of course, we can find such integers $k_{1},k_{2}$ in
either case because $a,b$ are coprime.) Without loss of generality, assume
the choice $bk_{1}-ak_{2}=\lfloor at_{2}-bt_{1}\rfloor $ gives the minimal
answer (the other case is symmetric). That means%
\begin{equation*}
\mu _{\{a,b\}}(t_{1},t_{2})=ax-(t_{1}+k_{1})=\frac{%
at_{2}-bt_{1}-[at_{2}-bt_{1}]}{a+b}.
\end{equation*}%
Choose integers $g,h$ such that $ag-bh=1$ and put%
\begin{equation}
k_{1}^{\prime }=k_{1}-h\text{ and }k_{2}^{\prime }=k_{2}-g.  \label{k'}
\end{equation}%
Then $bk_{1}^{\prime }-ak_{2}^{\prime }=\lfloor at_{2}-bt_{1}\rfloor $ $+1$ $%
\ $and if we take%
\begin{equation}
x^{\prime }=\frac{t_{1}+k_{1}^{\prime }+t_{2}+k_{2}^{\prime }}{a+b}
\label{x'}
\end{equation}%
then $ax^{\prime }-(t_{1}+k_{1}^{\prime })=-(bx^{\prime
}-(t_{2}+k_{2}^{\prime }))<0$ and%
\begin{eqnarray}
\left\Vert x^{\prime }(a,b)-(t_{1}+k_{1}^{\prime },t_{2}+k_{2}^{\prime
})\right\Vert _{\infty } &=&-\left( ax^{\prime }-(t_{1}+k_{1}^{\prime
})\right)  \notag \\
&=&-\left( \frac{at_{2}-bt_{1}-(\lfloor at_{2}-bt_{1}\rfloor +1)}{a+b}\right)
\notag \\
&=&\frac{1}{a+b}-\mu _{\{a,b\}}(t_{1},t_{2}).  \label{M3}
\end{eqnarray}
\end{proof}

\begin{remark}
\ \label{diff}By construction $x-x^{\prime }=(g+h)/(a+b)$. The reader should
observe that Lemma \ref{mainprelim} applies to this $x^{\prime }$, as well
as the best approximate $x$.
\end{remark}

\begin{lemma}
\label{main}Choose $n$ so large that $1/(a+b)-L_{n}>(b-a)/2n$. Suppose $\mu
_{\{a,b\}}(t_{1},t_{2})=\left\Vert
x(a,b)-(t_{1}+k_{1},t_{2}+k_{2})\right\Vert _{\infty }:=\lambda _{x}$.
Choose integers $g,h$ such that $ag-bh=1$ and define $x^{\prime
},k_{1}^{\prime },k_{2}^{\prime }$ as in (\ref{k'}) and (\ref{x'}). Assume 
\begin{equation*}
\lambda _{x}=ax-(t_{1}+k_{1})>\frac{1}{a+b}-L_{n}.
\end{equation*}%
Fix $E\geq L_{n}$ and define $z_{1}=z_{1}(E,x)$ and $z_{2}=z_{2}(E,x)$ as in
Lemma \ref{mainprelim}. Put%
\begin{equation*}
z_{3}=x^{\prime }+\frac{n\left( \frac{1}{a+b}-\lambda _{x}\right) -(a+n)E}{an%
},\;\;z_{4}=x^{\prime }+\frac{(b+n)E-n\left( \frac{1}{a+b}-\lambda
_{x}\right) }{bn}.
\end{equation*}%
If there is some $z$ with $nz\equiv t_{3}\mod 1$, satisfying $z_{1}\leq
z\leq z_{2}$ or $z_{3}\leq z\leq z_{4}$, then $\mu (t_{1},t_{2},t_{3})\leq
E. $
\end{lemma}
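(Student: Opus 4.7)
The plan is to derive Lemma \ref{main} as two essentially parallel applications of Lemma \ref{mainprelim}, one to the best approximate $x$ and one to the second-best approximate $x'$ supplied by Lemma \ref{secondbest}. Recall that the bulk of the work — the $\pm\delta$ perturbation argument and the resulting interval bounds on $z$ — has already been carried out in Lemma \ref{mainprelim}, so the task here is really one of bookkeeping and substitution.

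First I would verify the hypotheses of Lemma \ref{mainprelim} for $x$. The hypothesis $ax-(t_1+k_1)=\lambda_x$ (with the positive sign) puts us in case (i) of that lemma. The chain $\lambda_x > 1/(a+b) - L_n > (b-a)/(2n)$, valid by the standing choice of $n$, supplies the required lower bound on $\lambda_x$. Moreover, since $\lambda_x \leq \alpha(a,b) = 1/(2a+2b) < L_n \leq E$, the interval $[z_1(E,x),z_2(E,x)]$ is nondegenerate. So part (i) of Lemma \ref{mainprelim} applies directly and yields that if some $z \in [z_1(E,x),z_2(E,x)]$ satisfies $nz \equiv t_3 \mod 1$, then $\mu(t_1,t_2,t_3) \leq E$. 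This is precisely the first half of the conclusion, since the formulas for $z_1,z_2$ in the two lemmas coincide.

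Next I would turn to $x'$. The construction in the proof of Lemma \ref{secondbest} records that $ax' - (t_1+k_1') = -(bx'-(t_2+k_2')) < 0$, with $\lambda_{x'} := |ax'-(t_1+k_1')| = 1/(a+b)-\lambda_x$. This places us in case (ii) of Lemma \ref{mainprelim} relative to the point $x'$. The hypotheses need rechecking: the assumption $\lambda_x > 1/(a+b)-L_n$ gives $\lambda_{x'} < L_n \leq E$, and $\lambda_{x'} \geq 1/(2a+2b) > (b-a)/(2n)$ for $n$ large. Substituting $\lambda_{x'} = 1/(a+b)-\lambda_x$ into the formulas for $z_3(E,x')$ and $z_4(E,x')$ of Lemma \ref{mainprelim} recovers exactly the $z_3,z_4$ defined in the statement of Lemma \ref{main}. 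Part (ii) then supplies the second half of the conclusion.

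There is no real conceptual obstacle — the argument is a direct invocation of the earlier setup. The only care required is in tracking the substitutions $x \mapsto x'$ and $\lambda_x \mapsto 1/(a+b)-\lambda_x$ so as to match Lemma \ref{mainprelim}'s intervals to those written in Lemma \ref{main}, and in noting that the sign of the first coordinate flips from $+\lambda_x$ at $x$ to $-\lambda_{x'}$ at $x'$, which is exactly what Lemma \ref{secondbest}'s construction arranges.
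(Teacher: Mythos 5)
Your proof is correct and follows essentially the same route as the paper: verify the hypotheses of Lemma \ref{mainprelim} for both $x$ (case (i)) and the second-best point $x'$ (case (ii), using $\lambda_{x'}=1/(a+b)-\lambda_x$, which lies strictly between $(b-a)/(2n)$ and $L_n\leq E$), identify the stated $z_3,z_4$ with $z_3(E,x'),z_4(E,x')$, and invoke that lemma.
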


\begin{proof}
Put $\lambda _{x^{\prime }}:=\left\vert ax^{\prime }-(t_{1}+k_{1}^{\prime
})\right\vert =-\left( ax^{\prime }-(t_{1}+k_{1}^{\prime })\right) $. As
shown in (\ref{M3}), $\lambda _{x^{\prime }}=1/(a+b)-\lambda _{x}$, and
since $\lambda _{x}=\mu _{\{a,b\}}(t_{1},t_{2})\leq 1/(2(a+b))$, it follows
that 
\begin{equation*}
\frac{b-a}{2n}<\lambda _{x}\leq \lambda _{x^{\prime }}<L_{n}\leq E.
\end{equation*}%
In the notation of Lemma \ref{mainprelim}, $z_{3}=z_{3}(E,x^{\prime })$ and $%
z_{4}=z_{4}(E,x^{\prime })$, hence a direct application of that lemma yields
the result.
\end{proof}

\subsection{Conclusion of the proof of the upper bound in Theorem \protect
\ref{T:three}.}

We remind the reader that the numbers $E_{n}$ are defined to be the right
hand side of the formulas given in Theorem \ref{T:three}. Consider any $%
(t_{1},t_{2},t_{3})$ and choose $x\in \mathbb{R}$ and integers $k_{1},k_{2}$
such that 
\begin{equation*}
\mu _{\{a,b\}}(t_{1},t_{2})=\left\Vert
x(a,b)-(t_{1}+k_{1},t_{2}+k_{2})\right\Vert _{\infty }.
\end{equation*}%
Without loss of generality we can assume $\mu
_{\{a,b\}}(t_{1},t_{2})=ax-(t_{1}+k_{1})=\lambda _{x}$ (rather than $%
-(ax-(t_{1}+k_{1}))$), for otherwise replace $\mathbf{t}=(t_{1},t_{2},t_{3})$
by $(-t_{1},-t_{2},-t_{3}),$ noting that $\mu (\mathbf{t})=\mu (-\mathbf{t})$%
.

If $\lambda _{x}\leq 1/(a+b)-L_{n}$, then by Corollary \ref{small}, $\mu
(t_{1},t_{2},t_{3})\leq L_{n}\leq E_{n}$.

Hence we can assume $\lambda _{x}>1/(a+b)-L_{n}$ and we define $x^{\prime }$%
, $z_{1},z_{2},z_{3}$ and $z_{4}$ as in Lemma \ref{main}, with $E_{n}$
taking the role of $E$.

When $R\neq a$, the strategy of the proof is to check that $%
z_{2}-z_{1}+z_{4}-z_{3}\geq 1/n$ and that either $z_{2}-z_{3}$ or $%
z_{1}-z_{4}$ is equal to $j/n$ for some integer $j$. Thus there is always a
choice of $z$ with $nz\equiv t_{3}\mod 1$ and satisfying $z\in \lbrack
z_{1},z_{2}]\cup \lbrack z_{3},z_{4}]$. Appealing to Lemma \ref{main} shows
that $\mu (t_{1},t_{2},t_{3})\leq E_{n}$.

We will outline the details for $R<a$ \ and leave the other $R\neq a$ cases
for the reader. First, note that, regardless of the choice of $R,$ 
\begin{equation}
z_{2}-z_{3}=x-x^{\prime }+\dfrac{2(a+n)E_{n}}{an}-\dfrac{1}{a(a+b)}.
\label{z2-z3}
\end{equation}%
As observed in Remark \ref{diff}, $x-x^{\prime }=(g+h)/(a+b),$ where $g$ and 
$h$ are integers such that $ag-bh=1$. As $a(g+h)=1+h(a+b)\equiv 1\mod (a+b)$%
, there is an integer $v$ such that $T=g+h+v(a+b)$.

Choosing integers $M$ and $M^{\prime }$ such that $n=M(a+b)+r$ and $%
R=M^{\prime }(a+b)+rT,$ and substituting in the value of $E_{n}$ for $R<a$
gives the identity%
\begin{eqnarray*}
z_{2}-z_{3}-\frac{1}{n} &=&\dfrac{g+h}{a+b}-\dfrac{1}{a(a+b)}+\frac{%
n+a(a+b)-aR}{an(a+b)}-\frac{1}{n} \\
&=&\frac{M(g+h)-M^{\prime }-rv}{n}.
\end{eqnarray*}%
This shows there is an integer $j=$ $M(g+h)-M^{\prime }-rv$ such that $%
z_{2}=z_{3}+j/n$.

A straight forward calculation gives%
\begin{equation*}
z_{2}-z_{1}+z_{4}-z_{3}=\frac{2(an+bn+2ab)E_{n}-n}{abn}.
\end{equation*}
It follows that $z_{2}-z_{1}+z_{4}-z_{3}\geq 1/n$ if and only if $E_{n}\geq
(n+ab)/(2(an+bn+2ab))$ and this latter condition is certainly true.

Now suppose that $R=a$. Again, using (\ref{z2-z3}), taking the value of $%
E_{n}$ for $R=a,$ and simplifying gives 
\begin{equation*}
z_{2}-z_{3}-\dfrac{2aE_{n}}{n(a+b)}=\dfrac{g+h}{a+b}+\dfrac{n+ab}{an(a+b)}-%
\dfrac{1}{a(a+b)}=\dfrac{n(g+h)+b}{n(a+b)}.
\end{equation*}%
Since $a^{2}T\equiv a=R\equiv rT$ and $T$ is relatively prime to $a+b$, we
have 
\begin{equation*}
n\equiv r\equiv a^{2}\mod (a+b).
\end{equation*}

Hence there is an integer $M^{\prime \prime }$ such that $n=M^{\prime \prime
}(a+b)+a^{2}$. Using again the identity $a(g+h)=1+h(a+b)$ gives 
\begin{equation*}
\dfrac{n(g+h)+b}{n(a+b)}=\dfrac{M^{\prime \prime }(g+h)}{n}+\dfrac{%
a^{2}(g+h)+b}{n(a+b)}=\dfrac{M^{\prime \prime }g+M^{\prime \prime }h+ah+1}{n}%
.
\end{equation*}%
Thus 
\begin{equation*}
z_{2}=z_{3}+\dfrac{2aE_{n}}{n(a+b)}+\dfrac{s}{n}
\end{equation*}%
for some integer $s$. In particular, $z_{2}>z_{3}+s/n$.

We argue next that $z_{2}\leq z_{4}+s/n$, which is equivalent to proving
that $z_{4}-z_{3}\geq 2aE_{n}/(na+nb)$. To see this, note that the
definition of $z_{3}$ and $z_{4}$ gives 
\begin{equation*}
z_{4}-z_{3}=\dfrac{(b+n)E_{n}}{bn}+\dfrac{(a+n)E_{n}}{an}-\dfrac{\frac{1}{a+b%
}-\lambda _{x}}{a}-\dfrac{\frac{1}{a+b}-\lambda _{x}}{b}.
\end{equation*}%
Because $1/(a+b)-\lambda _{x}<L_{n}=E_{n}$ we have 
\begin{equation*}
\begin{aligned} z_4-z_3-\dfrac{2aE_n}{n(a+b)} &> E_n \cdot
\left(\dfrac{b+n}{bn}+\dfrac{a+n}{an}-\dfrac{2a}{n(a+b)} \right)- E_n\cdot
\left(\dfrac 1 a +\dfrac 1 b \right)\\ &=\dfrac{2bE_n}{(a + b) n }>0
\end{aligned}.
\end{equation*}%
Therefore $[z_{1},z_{4}+s/n]\subset \lbrack z_{1},z_{2}]\cup \lbrack
z_{3}+s/n,z_{4}+s/n]$. The length, $V,$ of $[z_{1},z_{4}+s/n]$ is the sum of
the lengths of the intervals $[z_{1},z_{2}]$ and $[z_{3}+s/n,z_{4}+s/n]$,
but with the length of the overlap, the subinterval $[z_{3}+s/n,z_{2}]$,
subtracted: 
\begin{equation*}
\begin{aligned} V&=z_4-z_3+z_2-z_1-\dfrac{2aE_n}{n(a+b)} \\ &
=\dfrac{2(an+bn+2ab)E_n-n}{abn}-\dfrac{2aE_n}{n(a+b)} \\ &=\dfrac 1 n +
\dfrac{b (a b + n)}{(a + b) n (a b + a n + b n)}>\dfrac 1 n. \end{aligned}
\end{equation*}%
We can again conclude that there is an integer $z\in \lbrack
z_{1},z_{2}]\cup \lbrack z_{3},z_{4}]$, with $nz\equiv t_{3}\mod1$.

\section{Lower bounds on the Kronecker constants}

Continue with the standard assumptions: $a$, $b$ and $n$ are positive
integers with $a<b<n$, $\gcd (a,b)=1$ and $n$ sufficiently large. Recall
that $E_{n}$ is the value claimed by Theorem \ref{T:three} for $\alpha
(a,b,n)$.

For $R\neq a$, the proof that $\alpha (a,b,n)$ $\geq E_{n}$ will follow from
showing that $E_{n}$ is equal to the binary Kronecker constant, $\beta
(a,b,n),$ since it is obvious that $\alpha (a,b,n)\geq \beta (a,b,n)$ for
all $a,b,n$. Unfortunately, when $R=a$, we will see that $\beta
(a,b,n)<E_{n}.$ Thus a different and more direct argument will be given to
establish the specified lower bound in that case. These arguments are very
technical.

The key to obtaining the needed lower bounds is that, depending on the size
of $\mu _{\{a,b\}}(t_{1},t_{2})$, we can restrict the search for the best
approximate $x$ for the $(t_{1},t_{2},t_{3})$-approximation problem to a
small range of real numbers. The first step towards this is to describe the
uniqueness modulo $1$ of some of the intervals used in the argument.

\begin{lemma}
\label{L:uniqueX} Suppose that for some real $t_{1}$ and $t_{2}$ there are
real numbers $x$ and $y$ and integers $j_{1}$, $j_{2}$, $k_{1}$ and $k_{2}$
such that 
\begin{equation*}
\begin{aligned} ax-t_1-j_1=ay-t_1-k_1=-(bx-t_2-j_2)=-(by-t_2-k_2)
\end{aligned}.
\end{equation*}%
Then there is an integer $s$ such that $%
(y,j_{2},k_{2})=(s+x,as+j_{1},bs+k_{1})$.
\end{lemma}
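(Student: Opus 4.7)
The plan is to view the four given equalities as two linear constraints relating $x$ and $y$ via the forms $a(\cdot)$ and $b(\cdot)$, and then to exploit $\gcd(a,b) = 1$ through Bezout's identity. Setting $u = y - x$, the equality $ax - t_1 - j_1 = ay - t_1 - k_1$ simplifies at once to $au = k_1 - j_1$, and similarly $bx - t_2 - j_2 = by - t_2 - k_2$ yields $bu = k_2 - j_2$. Hence both $au$ and $bu$ are integers, even though a priori $u$ itself is only real.

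Since $\gcd(a,b) = 1$, I would invoke Bezout to choose integers $p, q$ with $pa + qb = 1$. Then $u = p(au) + q(bu)$ is a $\mathbb{Z}$-linear combination of two integers, hence itself an integer. Calling this integer $s$ gives $y = x + s$, and back-substituting into the relations $au = k_1 - j_1$ and $bu = k_2 - j_2$ yields the desired expressions for the other two components of the triple in terms of $j_1$, $j_2$, and $s$.

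There is no real obstacle: the lemma is a rigidity statement saying that once a sign convention is fixed in the balanced relation $ax - t_1 - j \equiv -(bx - t_2 - j') \pmod{\mathbb{Z}}$, any second solution $y$ differs from $x$ by a translation that both $a$ and $b$ carry into $\mathbb{Z}$, and coprimality of $a$ and $b$ forces this translation to be an integer rather than merely a rational with denominator dividing $\gcd(a,b)$. The total length of the argument should be four or five lines, with the only nontrivial ingredient being Bezout.
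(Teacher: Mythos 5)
Your proof is correct and lands in essentially the same place as the paper's, differing only in how the intermediate relations are organized. The paper first uses the balance relations $ax-t_1-j_1=-(bx-t_2-j_2)$ and $ay-t_1-k_1=-(by-t_2-k_2)$ to solve for $x$ and $y$ explicitly as $(t_1+t_2+j_1+j_2)/(a+b)$ and $(t_1+t_2+k_1+k_2)/(a+b)$, from which it extracts $a(k_2-j_2)=b(k_1-j_1)$ and then invokes coprimality (Euclid's lemma) to get $k_1-j_1=as$, $k_2-j_2=bs$, and finally $y-x=s$. You skip the explicit solution for $x$ and $y$ entirely: setting $u=y-x$, the two ``same-letter'' equalities alone give $au=k_1-j_1$ and $bu=k_2-j_2$, and Bezout then shows $u\in\mathbb{Z}$. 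Both arguments hinge on the same coprimality principle, but yours is a hair leaner because it never needs the cross-balance equality, and it makes transparent why the rigidity holds: $u$ is a real number carried into $\mathbb{Z}$ by both $a$ and $b$, and $\gcd(a,b)=1$ leaves no room for a nonintegral $u$.

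One small remark on the conclusion: as you note, what actually falls out is $k_1=j_1+as$ and $k_2=j_2+bs$ (and $y=x+s$); the triple as printed in the lemma statement, $(y,j_2,k_2)=(s+x,as+j_1,bs+k_1)$, appears to contain a transcription slip, and your derivation yields the relations in the form they are actually used later.
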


\begin{proof}
By arguments similar to those used in the proof of Lemma \ref{secondbest}, 
\begin{equation*}
\dfrac{at_{2}-bt_{1}+aj_{2}-bj_{1}}{a+b}=\dfrac{at_{2}-bt_{1}+ak_{2}-bk_{1}}{%
a+b}.
\end{equation*}%
Consequently, $a(k_{2}-j_{2})=b(k_{1}-j_{1})$. Because $\gcd (a,b)=1$ there
is an integer $s$ such that $k_{1}-j_{1}=as$ and thus $k_{2}-j_{2}=bs$. It
follows that 
\begin{equation*}
y-x=\frac{k_{1}-j_{1}+k_{2}-j_{2}}{a+b}=s\text{.}
\end{equation*}
\end{proof}

\subsection{Calculating the binary Kronecker constants.}

As noted in \cite{HR3}, there is a ``toggling trick" for $\{0,1/2\}$ - valued
functions. Suppose $\theta $ is defined on $\{n_{j}\}\subseteq \mathbb{Z}$
by $\theta (n_{j})=\theta _{j}\in \{0,1/2\}$ for all $j$. Then we have $\mu
_{\{n_{j}\}}(\theta )=\mu _{\{n_{j}\}}(\widetilde{\theta })$ where $%
\widetilde{\theta }_{j}=\theta _{j}$ if $n_{j}$ is even and \ $\widetilde{%
\theta }_{j}=1/2-\theta _{j}$ if $n_{j}$ is odd. With $\{n_{1},n_{2}\}=\{a,b%
\}$, since the assumption that $\gcd (a,b)=1$ implies at least one of $a$ or 
$b$ is odd, the four binary possibilities break into pairs, which are
equivalent under toggling. One of these pairs includes $(0,0)$ with $\mu
_{\{a,b\}}(0,0)=0$. So computing $\beta (a,b)$ reduces to computing just one
of the four binary possibilities.

\begin{lemma}
\label{L:binary2element} We have $\beta (a,b)=\alpha (a,b)=1/(2a+2b)$. To be
more precise,%
\begin{equation*}
\beta (a,b)=\left\{ 
\begin{array}{cc}
\mu _{\{a,b\}}(0,1/2)=\mu _{\{a,b\}}(1/2,0) & \text{if }a,b\text{ are both
odd} \\ 
\mu _{\{a,b\}}(1/2,0)=\mu _{\{a,b\}}(1/2,1/2) & \text{if }a\text{ is even, }b%
\text{ odd} \\ 
\mu _{\{a,b\}}(0,1/2)=\mu _{\{a,b\}}(1/2,1/2) & \text{if }a\text{ is odd, }b%
\text{ even}%
\end{array}%
\right. .
\end{equation*}

Furthermore, if $\mu _{\{a,b\}}(\theta _{1},\theta _{2})=0,$ then for all
real $\theta _{3}$, $\mu (\theta _{1},\theta _{2},\theta _{3})\leq
(3b-a)/(2n)$.
\end{lemma}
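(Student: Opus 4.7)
The plan is to compute the binary Kronecker constant directly from the closed form for $\mu_{\{a,b\}}$ that is established inside the proof of Lemma \ref{secondbest}. Reading the calculation around equation (\ref{M1}), and recalling from the text surrounding (\ref{oppsign}) that a balanced-sign best approximate always exists, one obtains
\begin{equation*}
\mu_{\{a,b\}}(t_{1},t_{2})=\frac{\langle at_{2}-bt_{1}\rangle }{a+b},
\end{equation*}
where the fact that $\gcd(a,b)=1$ lets $bk_{1}-ak_{2}$ range over all of $\mathbb{Z}$. In particular $\mu_{\{a,b\}}\le 1/(2a+2b)$ uniformly in $(t_{1},t_{2})$, so $\beta(a,b)\le\alpha(a,b)\le 1/(2a+2b)$.

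Next I apply the toggling trick. Since $\mu_{\{a,b\}}(0,0)=0$, toggling partitions the four binary inputs into two equivalence classes, one containing $(0,0)$. A routine parity split, using that toggling flips the $a$-coordinate exactly when $a$ is odd and similarly for $b$, identifies the nontrivial class with the pair of representatives listed in the statement, in each of the three regimes. Substituting those representatives into the displayed formula gives $\langle a/2\rangle /(a+b)$, $\langle -b/2\rangle /(a+b)$, or $\langle (a-b)/2\rangle /(a+b)$, depending on the case. In every parity regime at least one of $a$, $b$, or $a-b$ is odd, so the numerator equals $1/2$ and the value is exactly $1/(2a+2b)$. This supplies the matching lower bound, yielding $\beta(a,b)=\alpha(a,b)=1/(2a+2b)$ together with the precise identifications in the statement.

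The final assertion is essentially immediate from Lemma \ref{smalllambda}. If $\mu_{\{a,b\}}(\theta_{1},\theta_{2})=0$, then there exist a real $x$ and integers $k_{1},k_{2}$ with $ax=\theta_{1}+k_{1}$ and $bx=\theta_{2}+k_{2}$, so (\ref{oppsign}) holds trivially and $\lambda_{x}=0\le (b-a)/(2n)$. Lemma \ref{smalllambda} then delivers $\mu(\theta_{1},\theta_{2},\theta_{3})\le (3b-a)/(2n)$ for every real $\theta_{3}$.

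I expect the only real obstacle to be bookkeeping: correctly matching each of the three parity regimes to its nontrivial toggling class and verifying that the resulting $at_{2}-bt_{1}$ is a half-odd-integer in each case. Everything else follows by extracting the balanced-sign formula from Lemma \ref{secondbest} and quoting Lemma \ref{smalllambda}.
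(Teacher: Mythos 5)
Your proposal is correct and follows essentially the same route as the paper: reduce to one nontrivial toggling class, evaluate via (\ref{M1}), and read off the $1/(2a+2b)$ bound from a parity/$\gcd$ argument, finishing with Lemma \ref{smalllambda}. The only cosmetic difference is that you first package (\ref{M1}) into the closed form $\mu_{\{a,b\}}(t_1,t_2)=\langle at_2-bt_1\rangle/(a+b)$ and derive both bounds from it, whereas the paper substitutes the specific binary values into (\ref{M1}) directly and quotes $\alpha(a,b)=1/(2a+2b)$ from \cite{HR1} for the upper bound.
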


\begin{proof}
Suppose that $a$ is odd and that $\theta (a)=0=\theta _{1}$, $\theta
(b)=1/2=\theta _{2}$. From \cite{HR1}, we know there are a real number $x$
and integers $k_{1},k_{2}$ such that 
\begin{equation*}
\mu _{\{a,b\}}(\theta _{1},\theta _{2})=\Vert (\theta _{1},\theta
_{2})-x(a,b)+(k_{1},k_{2})\Vert _{\infty }.
\end{equation*}%
As in (\ref{M1}), 
\begin{equation*}
\mu _{\{a,b\}}(0,1/2)=\dfrac{|a\theta _{2}-b\theta _{1}+ak_{2}-bk_{1}|}{a+b}=%
\dfrac{|(2k_{2}+1)a-2k_{1}b|}{2a+2b}.
\end{equation*}%
Because $a$ is odd, it follows that $\beta (a,b)\geq \mu
_{\{a,b\}}(0,1/2)\geq 1/(2a+2b)$. But $\mu _{\{a,b\}}(0,0)=$ $\mu
_{\{a,b\}}(1/2,1/2)=0,$ and we always have, $\mu _{\{a,b\}}(0,1/2)\leq
\alpha (a,b)=1/(2a+2b)$ (\cite{HR1}), hence $\beta (a,b)=$ $\mu
_{\{a,b\}}(0,1/2)=\alpha (a,b)=1/(2a+2b)$.

The case $a$ is even is similar.

The last claim of the lemma follows from Lemma \ref{smalllambda}.
\end{proof}

\textbf{Notation and Elementary Observations:} For the remainder of this
subsection we will set 
\begin{equation*}
(t_{1},t_{2})=\left\{ 
\begin{array}{cc}
(1/2,0) & \text{if }b\text{ is odd} \\ 
(0,1/2) & \text{if }b\text{ is even}%
\end{array}%
\right. .
\end{equation*}%
It is easy to see from the toggling trick and the previous lemma that%
\begin{equation*}
\beta (a,b,n)=\max \{\mu (t_{1},t_{2},t_{3}):t_{3}=0,1/2\}.
\end{equation*}%
To calculate $\mu (t_{1},t_{2},t_{3}),$ we will first exhibit a specific
interval that contains a best approximate to $\mathbf{t}=(t_{1},t_{2},t_{3})$%
. This is done in Lemma \ref{singleInt2}. In Prop. \ref{binary1} we use
ideas from the proof of Lemma \ref{mainprelim} to calculate $\mu
(t_{1},t_{2},t_{3})$ by minimizing $\left\Vert \left\langle \mathbf{t}%
-y(a,b,n)\right\rangle \right\Vert _{\infty }$ over $y$ in this interval.
The final step is to find the maximum value of $\mu (t_{1},t_{2},t_{3})$
over $\mathbf{t}\in \{0,1/2\}^{3}$.

If $b$ is odd, then $\gcd (2a,b)=1,$ so there are integers $G,H$ such that $%
2aG-bH=1$. Notice that $H$ is odd. Set $g=2G$ and $h=H$. Put 
\begin{equation*}
k_{1}=(H-1)/2\text{ and }k_{2}=G.
\end{equation*}%
If, instead, $b$ is even there are integers $H$ and (odd) $G$ such that $%
aG-b2H=1$. Set $g=G$ and $h=2H$, 
\begin{equation*}
k_{1}=H\text{ and }k_{2}=(G-1)/2.
\end{equation*}

In either case, $ag-bh=1$. Put%
\begin{equation}
x=\frac{g+h}{2(a+b)}.  \label{defx}
\end{equation}%
As usual, set $\lambda _{x}:=ax-t_{1}-k_{1}$. The reader should check that $%
\lambda _{x}=-(bx-t_{2}-k_{2})=1/(2a+2b)$. For $z\in \mathbb{R}$, define%
\begin{equation}
y_{z}=\left\{ 
\begin{array}{cc}
x+\frac{{\huge |nz-nx|-\lambda }_{x}}{{\huge a+n}} & \text{if }z\geq x \\ 
x-\frac{{\huge |nz-nx|-\lambda }_{x}}{{\huge b+n}} & \text{if }z<x%
\end{array}%
\right.  \label{defy}
\end{equation}%
Observe that the expression, $y_{z},$ has the form $x\pm \delta $ that
appears in the lead up to the proof of Lemma \ref{mainprelim}.

Since $\lambda _{x}=1/(2(a+b))$, 
\begin{equation}
\frac{n(a+b)\lambda _{x}+ab}{2ab+an+bn}=\frac{n+2ab}{2(an+bn+2ab)}:=E\text{.}
\label{E}
\end{equation}%
Recall that $E$ was an important number in the upper bound argument.

Let $z_{1}=z_{1}(E,x)$ and $z_{2}=z_{2}(E,x)$ be as defined in Lemma \ref%
{mainprelim}. Our next step is to show that we can find a best approximate
to $(t_{1},t_{2},t_{3})$ in $[y_{z_{1}},y_{z_{2}}]$.

\begin{lemma}
\label{singleInt2} Assume the notation is as above. For each choice of $%
t_{3}\in \{0,1/2\}$ there is some $y\in \lbrack y_{z_{1}},y_{z_{2}}]$ and
integer $k_{3}$ such that 
\begin{equation*}
\mu (t_{1},t_{2},t_{3})=\Vert
(t_{1},t_{2},t_{3})-y(a,b,n)+(k_{1},k_{2},k_{3})\Vert _{\infty }.
\end{equation*}
\end{lemma}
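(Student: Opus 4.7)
My strategy is to show that every best approximate can be reduced, by an integer translation and possibly by the symmetry $y\mapsto -y$, to one lying in $[y_{z_1},y_{z_2}]$ with the prescribed integers $k_1,k_2$.

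First, I would apply Corollary \ref{strategy} with $\mu_{\{a,b\}}(t_1,t_2)=\lambda_x=1/(2(a+b))$ to conclude $\mu:=\mu(t_1,t_2,t_3)\le E$ for $n$ sufficiently large. Substituting $\lambda_x$ and $E$ into the formulas for $z_1, z_2$ and (\ref{defy}) yields the key identification
\[
[y_{z_1},y_{z_2}] \;=\; \bigl[\,x-(E-\lambda_x)/b,\; x+(E-\lambda_x)/a\,\bigr],
\]
and this is precisely the set of $y$ on which $\max(|ay-t_1-k_1|,|by-t_2-k_2|) \le E$ \emph{using the prescribed} $k_1,k_2$.

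Next, let $y^*$ be any best approximate with integers $(j_1,j_2,j_3)$. Its two-element cost is at most $\mu\le E$, so $y^*$ lies in one component of $S_E^* := \{\, y : \exists\, i_1,i_2\in\mathbb Z, \max(|ay-t_1-i_1|,|by-t_2-i_2|)\le E\,\}$. By (\ref{M1}) and the form of $(t_1,t_2)$, the balanced costs at the balanced points take values in $\{1/(2(a+b)),\,3/(2(a+b)),\ldots\}$, so for $n$ large only the minimum-cost balanced points produce nonempty components of $S_E^*$. These are the integer translates of $x$ (the case $ax_0-t_1-i_1=+\lambda_x$) and of the ``second best'' point $x'$ from Lemma \ref{secondbest} (the case $-\lambda_x$). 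A short direct calculation using the explicit $g,h,k_1,k_2$ of the lemma setup yields $x'=-x$.

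Finally, I would handle the two cases. If $y^* \in s+[x-(E-\lambda_x)/b,\;x+(E-\lambda_x)/a]$ for some $s\in\mathbb Z$, the translation $y^*\mapsto y^*-s$, $j_i \mapsto j_i - a_i s$ (where $(a_1,a_2,a_3)=(a,b,n)$) produces a best approximate in $[y_{z_1},y_{z_2}]$ whose first two integer coordinates equal $k_1,k_2$, and any integer $k_3$ witnesses the third cost. Otherwise $y^*$ lies in an integer translate of the $x'$-component; here I invoke the symmetry $y\mapsto -y$. Because each $t_i\in\{0,1/2\}$ satisfies $-t_i\equiv t_i\pmod 1$, the point $-y^*$ is still a best approximate (with integers $-j_i-2t_i$), and $y\mapsto -y$ carries the $x'$-component onto the $x$-component (using $-x'=x$ and the swap of the asymmetric endpoint widths $(E-\lambda_x)/a$ and $(E-\lambda_x)/b$). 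One further integer shift places the result in $[y_{z_1},y_{z_2}]$ with the prescribed $k_1,k_2$, using the identities $2k_1+2t_1=h$ and $2k_2+2t_2=g$ that follow directly from the lemma's formulas for $k_1,k_2,g,h$. The main obstacle is this final bookkeeping in the flip-and-shift case, which must be checked separately for $b$ odd and $b$ even.
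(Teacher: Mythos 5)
Your proof is correct in outline and reaches the same conclusion as the paper's, but it is organized around a somewhat different picture. The paper first establishes $\mu(t_1,t_2,t_3)\le E$ via Lemma \ref{mainprelim} (your Corollary \ref{strategy} invocation is an equivalent shortcut), then takes an arbitrary best approximate $u$ with nearest integers $(j,k,m)$ and splits on the sign of $(t_2+k)/b-(t_1+j)/a$. In the nonnegative case it rules out $H\ge 1$ by a $1/(a+b)$ cost bound, invokes Lemma \ref{L:uniqueX} to get an integer translate $u-s$ with the prescribed $k_1,k_2$, and forces $u-s\in[y_{z_1},y_{z_2}]$ by contradiction using the proof of Lemma \ref{mainprelim}. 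In the opposite-sign case it shows $H=1$, and then uses $t_3\in\{0,1/2\}$ to reflect the whole data through $\mathbf{t}\mapsto -\mathbf{t}$, reducing to the first case. Your version replaces the sign split and Lemma \ref{L:uniqueX} by the equivalent geometric observation that, for $n$ large, the two-element $E$-sublevel set has components only at the integer translates of $x$ and $x'$, and replaces the implicit reflection by the explicit identity $x'=-x$ (which is indeed correct: Remark \ref{diff} gives $x-x'=(g+h)/(a+b)=2x$). Your identities $2k_1+2t_1=h$ and $2k_2+2t_2=g$ also check out in both the $b$ odd and $b$ even cases, so the shift-and-flip bookkeeping you defer is straightforward. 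In short: same underlying mechanism (integer-translate rigidity of the balanced point plus $\mathbf{t}\mapsto-\mathbf{t}$ symmetry), but your sublevel-set framing makes the ``second-best'' point and the reflection more transparent, at the cost of having to verify the sublevel-set structure and $x'=-x$ that the paper bypasses via Lemma \ref{L:uniqueX}.

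One small point you should tighten: you assert the nearest-integer pair $(j_1,j_2)$ for the optimal $y^*$ is well-defined and that $y^*$ lands in a component of $S_E^*$ with \emph{those} integers. This follows because $\mu(t_1,t_2,t_3)\le E<1/2$, so nearest integers are unique; your phrase ``any integer $k_3$ witnesses the third cost'' should read ``the translate $j_3-ns$ (or the reflected $-j_3-2t_3$) serves as $k_3$,'' since $\mu$ is computed with a specific $k_3$, not an arbitrary one. These are presentation points, not gaps.
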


\begin{proof}
Here $E>1/(2a+2b)=\lambda _{x}=ax-t_{1}-k_{1}>0$. For $n$ sufficiently large
we have $\lambda _{x}\geq (b-a)/(2n)$, hence we may use Lemma \ref%
{mainprelim}.

The choice of $E$ ensures that $z_{2}-z_{1}=1/n$ (see Remark \ref{length}).
Thus for each real $t_{3}$ there is an integer $k_{3}$ such that $%
z=(t_{3}+k_{3})/n$ belongs to the interval $[z_{1},z_{2}]$. Consequently, an
application of Lemma \ref{mainprelim} implies $\mu (t_{1},t_{2},t_{3})\leq E$%
. Furthermore, the proof of that lemma shows that if $(t_{3}+k_{3})/n$ is $%
z_{1}$ or $z_{2}$, then 
\begin{equation*}
E=\Vert (t_{1},t_{2},t_{3})-y_{z}(a,b,n)+(k_{1},k_{2},k_{3})\Vert _{\infty }.
\end{equation*}

As noted in \cite{HR1}, there is a real number $u$ and an integer vector $%
\mathbf{k}$ such that $\mu (t_{1},t_{2},t_{3})=\Vert
(t_{1},t_{2},t_{3})-u(a,b,n)+\mathbf{k}\Vert _{\infty }$. Because $\mu
(t_{1},t_{2},t_{3})\leq \alpha (a,b,n)<1/2$, the vector $\mathbf{k}=(j,k,m)$
where $j$ is the unique integer nearest to $au-t_{1}$, etc.

Case 1: Suppose that $(t_{1}+j)/a\leq (t_{2}+k)/b$. One can easily check
that there is an integer $H\geq 0$ such that 
\begin{equation*}
\dfrac{t_{2}+k}{b}-\dfrac{t_{1}+j}{a}=\dfrac{\lambda _{x}(a+b)+H}{ab}.
\end{equation*}%
Set $y=(t_{1}+t_{2}+j+k)/(a+b)$ so that 
\begin{equation*}
ay-t_{1}-j=-(by-t_{2}-k)=\dfrac{at_{2}-bt_{1}+ak-bj}{a+b}=\lambda _{x}+%
\dfrac{H}{a+b}.
\end{equation*}%
First, suppose that $H\geq 1$. Since $\lambda _{x}>0$, if $u\geq y$ then 
\begin{equation*}
\mu (t_{1},t_{2},t_{3})\geq \langle au-t_{1}\rangle =au-t_{1}-j\geq
ay-t_{1}-j\geq \dfrac{1}{a+b},
\end{equation*}%
while if $u\leq y$, then 
\begin{equation*}
\mu (t_{1},t_{2},t_{3})\geq \langle bu-t_{2}\rangle =t_{2}+k-bu\geq
t_{2}+k-by\geq \dfrac{1}{a+b}.
\end{equation*}%
However, for $n$ large enough, $\mu (t_{1},t_{2},t_{3})\leq \alpha
(a,b,n)<1/(a+b)$. This contradiction forces $H=0$.

With $H=0$, the real number $y$ meets the hypotheses of Lemma \ref{L:uniqueX}%
, hence there is an integer $s$ such that $(y,j,k)=(x+s,as+k_{1},bs+k_{2})$.

If $u>y_{z_{2}}+s$, then because $y_{z_{2}}>x$ we have $%
u>x+s>s+(t_{1}+j_{1})/a=(t_{1}+j)/a$. \ From the proof of Lemma \ref%
{mainprelim} we see that 
\begin{equation*}
\begin{aligned} \langle au-t_1 \rangle & =au-t_1-j =a(u-s)-t_1-k_1
>ay_{z_2}-t_1-k_1=E \end{aligned}.
\end{equation*}%
But, as observed in the second paragraph, $\mu (t_{1},t_{2},t_{3})\leq E$,
so this gives a contradiction.

Similarly, if $u<y_{z_{1}}+s$, then $u<(t_{2}+k)/b$ and $\left\langle
bu-t_{2}\right\rangle >k_{2}+t_{2}-by_{z_{2}}=E$. Again, this contradicts
the first paragraph.

Therefore $u-s\in \lbrack y_{z_{1}},y_{z_{2}}]$ and it has the same
approximation properties for $(t_{1},t_{2},t_{3})$ as does $u$, hence the
lemma holds.

Case 2: Otherwise, $(t_{1}+j)/a>(t_{2}+k)/b$. In this case, there is an
integer $H\geq 1$ such that 
\begin{equation*}
\dfrac{t_{1}+j}{a}-\dfrac{t_{2}+k}{b}=\dfrac{H-(a+b)\lambda _{x}}{ab}=\dfrac{%
2H-1}{2ab}.
\end{equation*}%
Let $y=(t_{1}+t_{2}+j+k)/(a+b)$. The reader can check that%
\begin{equation*}
by-t_{2}-k=-(ay-t_{1}-j)=\frac{2H-1}{2a+2b}.
\end{equation*}%
If $H\geq 2$ and $u\geq y,$ then $u>(t_{2}+k)/b$ and thus 
\begin{equation*}
\begin{aligned} \langle bu-t_2 \rangle &=|bu-t_2-k|=bu-t_2-k \ge by-t_2-k
\ge \dfrac{3}{2a+2b} \end{aligned}.
\end{equation*}%
If, instead, $u\leq y$, then $u<(t_{1}+j)/a$ and thus 
\begin{equation*}
\begin{aligned} \langle au-t_1 \rangle &=|au-t_1-j|=j+t_1-au \ge j+t_1-ay
\ge \dfrac{3}{2a+2b} \end{aligned}.
\end{equation*}%
However, for $n$ large enough, $\mu (t_{1},t_{2},t_{3})\leq \alpha
(a,b,n)<3/(2a+2b)$, thus $H=1$.

Now we use the fact that $t_{3}\in \{0,1/2\}$. Clearly 
\begin{equation*}
\mu (t_{1},t_{2},t_{3})=\Vert
(-t_{1},-t_{2},-t_{3})-(-u)(a,b,n)+(-j,-k,-m)\Vert _{\infty }.
\end{equation*}%
When $t_{1}=0$ let $j^{\prime }=-j$, otherwise let $j^{\prime }=-j-1$. In
either case, $-t_{1}-j=t_{1}+j^{\prime }$, so $%
-t_{1}-(-u)a-j=t_{1}-(-u)a+j^{\prime }$. Define $k^{\prime }$ and $m^{\prime
}$ similarly. Again, in either case $-t_{2}-k=t_{2}+k^{\prime }$, so $%
(t_{1}+j^{\prime })/a<(t_{2}+k^{\prime })/b$. Moreover, 
\begin{equation*}
\mu (t_{1},t_{2},t_{3})=\Vert (t_{1},t_{2},t_{3})-(-u)(a,b,n)+(j^{\prime
},k^{\prime },m^{\prime })\Vert _{\infty }.
\end{equation*}

Therefore $-u,$ with the integers $j^{\prime }$, $k^{\prime }$ and $%
m^{\prime },$ meets the conditions of the first part of this proof, where
the conclusion of the lemma has already been established.
\end{proof}

With these technical results, we can now calculate $\mu (t_{1},t_{2},t_{3})$.

\begin{proposition}
\label{binary1} Let $n\equiv r\mod (2a+2b)$, with $r\in \lbrack 0,2a+2b)$.
Let $(g+h)r\equiv S\mod (2a+2b)$ with $S\in \lbrack 0,2a+2b)$. Then 
\begin{equation*}
\mu (t_{1},t_{2},0)=%
\begin{cases}
\dfrac{1}{2(a+b)} & \text{if $S=0,1,2a+2b-1$} \\ 
\dfrac{n+bS}{2(a+b)(b+n)} & \text{if }2\leq S\leq 2a \\ 
\dfrac{n+2a^{2}+2ab-aS}{2(a+b)(a+n)} & \text{if $2a<S\leq 2a+2b-2$}%
\end{cases}%
\end{equation*}%
and 
\begin{equation*}
\mu (t_{1},t_{2},1/2)=%
\begin{cases}
\dfrac{1}{2(a+b)} & \text{if $S=a+b,a+b\pm 1$} \\ 
\dfrac{n+a^{2}+ab-aS}{2(a+b)(a+n)} & \text{if $0\leq S<a+b-1$} \\ 
\dfrac{n-ab-b^{2}+bS}{2(a+b)(b+n)} & \text{if $a+b+1<S\leq 3a+b$} \\ 
\dfrac{n+3a^{2}+3ab-aS}{2(a+b)(a+n)} & \text{if $3a+b<S<2a+2b$}%
\end{cases}%
.
\end{equation*}
\end{proposition}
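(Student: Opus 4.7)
The plan is to apply Lemma \ref{singleInt2} to localize a best approximate in a short interval and then to compute the approximation cost by a direct case analysis on $S$. By Lemma \ref{singleInt2}, for each $t_3\in\{0,1/2\}$ there exist $y\in[y_{z_1},y_{z_2}]$ and integer $k_3$ realizing $\mu(t_1,t_2,t_3)$. By the choice of $E$ in (\ref{E}) and Remark \ref{length}, the interval $[nz_1,nz_2]$ has length exactly $1$ and, since $E>\lambda_x$, contains $nx$ in its interior. Hence, for each $t_3$, there is a unique $k_3$ with $nz:=t_3+k_3\in[nz_1,nz_2]$, and the best approximate has the form $y=y_z$ for this $z$.

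The calculations leading up to Lemma \ref{mainprelim} give the cost $\|\langle\mathbf{t}-y_z(a,b,n)\rangle\|_\infty$ in closed form in terms of $d:=|nx-nz|$ and $\lambda_x=1/(2(a+b))$: it equals $\lambda_x$ if $d\le\lambda_x$; equals $(n\lambda_x+ad)/(a+n)$ if $nz>nx$ and $d>\lambda_x$; and equals $(n\lambda_x+bd)/(b+n)$ if $nz<nx$ and $d>\lambda_x$. That this is in fact the minimum over all $y\in[y_{z_1},y_{z_2}]$ follows from the standard observation that $y_z$ is the unique critical point balancing two of the three local linear costs; any displacement strictly increases the max. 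A direct computation from $x=(g+h)/(2(a+b))$ and $n\equiv r\bmod 2(a+b)$ then gives $nx\equiv S/(2(a+b))\bmod 1$.

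It then remains to identify, in each case, which $nz\in\mathbb{Z}+t_3$ lies in $[nz_1,nz_2]$ and at what distance from $nx$. For $t_3=0$, the nearest integers above and below $nx$ are at distances $(2(a+b)-S)/(2(a+b))$ and $S/(2(a+b))$. Explicit computation of $p:=nx-nz_1$ (from $\lambda_x=1/(2(a+b))$ and the value of $E$) yields $2(a+b)p=(n(b-a)+2a(a+b)(b+n))/(2ab+an+bn)$, which lies strictly between $2a$ and $2a+1$ for $n$ sufficiently large. Hence the nearest integer below lies in $[nz_1,nz_2]$ iff $S\le 2a$, giving the $b$-branch; for $S>2a$ the nearest above is relevant, giving the $a$-branch. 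The boundary values $S\in\{0,1,2a+2b-1\}$ are exactly those with $d\le\lambda_x$ and collapse to $\mu=1/(2(a+b))$. A parallel analysis for $t_3=1/2$ on the lattice $\mathbb{Z}+1/2$ uses that the fractional part $S/(2(a+b))$ crosses $1/2=(a+b)/(2(a+b))$ at $S=a+b$ (switching the identity of the nearest half-integer from above to below), while the $p$-threshold now falls near $S=3a+b$ (switching which side lies in $[nz_1,nz_2]$). This produces three regimes of formulas, and the values $S\in\{a+b-1,a+b,a+b+1\}$ yield $\mu=1/(2(a+b))$.

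The main obstacle will be the bookkeeping in the $t_3=1/2$ case, where two distinct transitions (at $S=a+b$ and near $S=3a+b$) must be kept straight, together with verifying that each boundary value of $S$ indeed collapses to $1/(2(a+b))$. Once the correct cases are identified, the algebra reducing $(n\lambda_x+ad)/(a+n)$ or $(n\lambda_x+bd)/(b+n)$ to the stated formulas is routine.
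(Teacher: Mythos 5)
Your proposal is correct and follows essentially the same route as the paper: localize a best approximate via Lemma~\ref{singleInt2}, then use the explicit cost formulas from the lead-up to Lemma~\ref{mainprelim}, and finally do a case analysis on $S$. The only organizational difference is that you unify the threshold computation by showing $2(a+b)p\in(2a,2a+1)$ once, and then determine directly which integer (or half-integer) lies in the interval $[nz_1,nz_2]$, whereas the paper organizes the same bookkeeping through a three-``Type'' decomposition (based on which pair of coordinates is balanced with opposite signs, a fact from \cite{HR1}); this is a cosmetic distinction, not a genuinely different argument, and your computed value of $2(a+b)p$, the observation $nx\equiv S/(2(a+b))\bmod 1$, and the resulting case boundaries in $S$ all check out against the paper.
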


\begin{proof}
To begin, we observe that there are integers $M$ and $N$ such that $%
n=M(2a+2b)+r$ and $r(g+h)=N(2a+2b)+S$. As $x=(g+h)/(2a+2b)$, we have 
\begin{equation*}
nx=M(g+h)+N+S/(2a+2b).
\end{equation*}

Let $E$, $z_{1}$, $z_{2}$, $k_{1},$ $k_{2}$ and $y_{z}$ be as defined in the
preamble to Lemma \ref{singleInt2}. Recall that $z_{1}<x<z_{2}$ and the
choice of $E$ ensures that $z_{2}-z_{1}=1/n$. Also, $%
(t_{1}+k_{1})/a<x<(t_{2}+k_{2})/b$. We note that if $z\leq x$, then $z\leq
y_{z}\leq x$ and, further, that the mapping $z\mapsto y_{z}$ is strictly
decreasing for $z\leq x-1/(2n(a+b))$. Conversely, if $z\geq x$, then $x\leq
y_{z}\leq z$ and the mapping $z\mapsto y_{z}$ is strictly increasing for $%
z\geq x+1/(2n(a+b))$.

For $n$ sufficiently large, we have $(t_{1}+k_{1})/a\leq z_{1}$ and $%
z_{2}\leq (t_{2}+k_{2})/b$. Consequently, for all $u\in \lbrack z_{1},z_{2}]$
(and hence for all $u\in \lbrack y_{z_{1}},y_{z_{2}}]$), 
\begin{equation*}
au-t_{1}-k_{1}\geq 0\text{ and }t_{2}+k_{2}-bu\geq 0.
\end{equation*}%
Moreover, for $u<x$,%
\begin{equation*}
au-t_{1}-k_{1}<ax-t_{1}-k_{1}=\dfrac{1}{2a+2b}=t_{2}+k_{2}-bx<t_{2}+k_{2}-bu.
\end{equation*}%
Likewise, for $u>x$ we have $au-t_{1}-k_{1}>1/(2a+2b)>t_{2}+k_{2}-bu$.

We will assume $t_{3}=1/2$. The case $t_{3}=0$ is similar and will be
discussed briefly at the end of the proof.

By Lemma \ref{singleInt2}, there is a real number $u\in \lbrack
y_{z_{1}},y_{z_{2}}]$ and an integer $k_{3}$ such that 
\begin{equation*}
\mu (t_{1},t_{2},t_{3})=\Vert
(t_{1},t_{2},1/2)-u(a,b,n)+(k_{1},k_{2},k_{3})\Vert _{\infty }.
\end{equation*}%
As proven in \cite{HR1}, at least two of $t_{1}-ua+k_{1}$, $t_{2}-ub+k_{2}$
and $1/2-un+k_{3}$ have opposite signs and absolute values equal to $\mu
(t_{1},t_{2},1/2)$. There are three types to consider, depending on which
pair is opposite in sign and balanced:

Type 1 : $au-t_{1}-k_{1}=$ $t_{2}+k_{2}-ub$ and $\mu
(t_{1},t_{2},t_{3})=au-t_{1}-k_{1}$;

Type 2 : $au-t_{1}-k_{1}=1/2+k_{3}-un$, $\mu
(t_{1},t_{2},t_{3})=au-t_{1}-k_{1}$ and not type 1;

Type 3 : $un-1/2-k_{3}=$ $t_{2}+k_{2}-ub$, $\mu
(t_{1},t_{2},t_{3})=t_{2}-ub+k_{2}$ and not type 1 or 2.

We begin with Type 1. The remarks above imply that $u=x$, $\mu
(t_{1},t_{2},t_{3})=1/(2a+2b)$, and $|1/2+k_{3}-nx|\leq 1/(2a+2b)$. Also, $%
k_{3}$ is the nearest integer to $nx-1/2$. As 
\begin{equation}
nx-1/2=M(g+h)+N+\dfrac{S-a-b}{2a+2b},  \label{nx-1/2}
\end{equation}%
this forces $S$ to be $a+b-1$, $a+b$, or $a+b+1$.

Conversely, if $S$ is one of these three values, then letting $u=x$ and $%
k_{3}=M(g+h)+N$ gives us a Type 1 case.

Next, suppose we are in the Type 2 situation. If $u<x$, then we would have $%
t_{2}+k_{2}-bu>au-t_{1}-k_{1}=\mu (t_{1},t_{2},1/2)$. The contradiction
implies $u\geq x$.

Because $au-t_{1}-k_{1}>0,$ we have $nu-t_{3}-k_{3}<0$. Hence $%
u<(t_{3}+k_{3})/n$ and $k_{3}>nu-t_{3}\geq nx-1/2$, so that $k_{3}\geq
\lceil nx-1/2\rceil $.

Let $z=(t_{3}+k_{3})/n$. If $z\leq x+1/(2n(a+b))$, then 
\begin{equation*}
|nz-nx|=|k_{3}+\frac{1}{2}-nx|\leq \frac{1}{2(a+b)}.
\end{equation*}%
That would imply $S=a+b\pm 1$ or $a+b$ and thus we would be in Type 1.

So $z>x+1/(2n(a+b))$ and because $z>x$ we have $u=x+\delta =y_{z}$, as seen
in the proof of Lemma \ref{mainprelim} (see especially Remark \ref{modified}%
). Since $u\leq y_{z_{2}}$, we know that $z\leq z_{2}<x+1/n$. Also, from the
proof of Lemma \ref{mainprelim}, we have 
\begin{equation}
\begin{aligned} \mu(t_1,t_2,t_3)=au-t_1-k_1 =\dfrac{n
\lambda_x+a|nz-nx|}{a+n} \end{aligned}.  \label{mu}
\end{equation}

Subcase 1: $S\leq a+b$. \ If $k_{3}>\lceil nx-1/2\rceil $, then being an
integer, $k_{3}\geq \lceil nx-1/2\rceil +1$ and thus 
\begin{equation*}
z=(k_{3}+t_{3})/n\geq (nx-1/2+1+1/2)/n=x+1/n>z_{2}.
\end{equation*}%
As this is impossible, $k_{3}=$ $\lceil nx-1/2\rceil $, consequently (\ref%
{nx-1/2}) implies $k_{3}=M(g+h)+N$. Hence 
\begin{equation*}
nz-nx=M(g+h)+N+\frac{1}{2}-[M(g+h)+N+\frac{S}{2(a+b)}]=\dfrac{a+b-S}{2(a+b)}.
\end{equation*}%
In this case, applying (\ref{mu}) and simplifying yields, 
\begin{equation*}
\mu (t_{1},t_{2},t_{3})=\dfrac{n-aS+a(a+b)}{2(a+b)(a+n)}.
\end{equation*}%
Using the formula $nz_{2}-nx=((a+n)E-n\lambda _{x})/a$, allows one to show
that 
\begin{equation*}
\frac{2b-1}{2(a+b)}<n(z_{2}-x)<\frac{2b}{2(a+b)}.
\end{equation*}%
Thus, having $z\leq z_{2}$ is equivalent to $a+b-S<2b$ and hence $a-b<S$.

Subcase 2: $S>a+b$. Again, if $k_{3}>$ $\lceil nx-1/2\rceil $, then 
\begin{equation*}
z=\dfrac{k_{3}+t_{3}}{n}\geq \dfrac{nx-1/2+1+1/2}{n}=x+\dfrac{1}{n}>z_{2}.
\end{equation*}%
So $k_{3}=$ $\lceil nx-1/2\rceil =M(g+h)+N+1$ and therefore 
\begin{equation*}
nz-nx=M(g+h)+N+1+\dfrac{1}{2}-\left[ M(g+h)+N+\dfrac{S}{2a+2b}\right] =%
\dfrac{3a+3b-S}{2a+2b}.
\end{equation*}%
This gives us 
\begin{equation*}
\mu (t_{1},t_{2},t_{3})=\dfrac{\frac{n}{2a+2b}+\frac{a(3a+3b-S)}{2a+2b}}{a+n}%
=\dfrac{n-aS+3a(a+b)}{2(a+b)(a+n)}.
\end{equation*}%
Again having $z\leq z_{2}$ is equivalent to $3a+3b-S<2b$ or $3a+b<S$.

Thus Type 2 implies $0\leq S<a+b-1$ or $3a+b<S<2a+2b$.

Finally, assume we are in Type 3. In this case $u\leq x$ for otherwise $%
au-t_{1}-k_{1}>t_{2}+k_{2}-bu=\mu (t_{1},t_{2},1/2)$. Another consequence of 
$t_{2}+k_{2}-bu>0$ is that we have $t_{3}+k_{3}-nu<0$, so $k_{3}\leq
nu-1/2\leq nx-1/2$.

Letting $z=(t_{3}+k_{3})/n$ gives $z<u\leq x$. If $z\geq x-1/(2n(a+b)),$
then $|nx-k_{3}-t_{3}|$ $\leq 1/(2a+2b)$. But this can happen only in Type 1.

So $z<x-1/(2n(a+b))$. \ Again, the proof of Lemma \ref{mainprelim} implies $%
u=y_{z}$ and 
\begin{equation*}
\mu
(t_{1},t_{2},t_{3})=|y_{z}b-(t_{2}+k_{2})|=|y_{z}n-(t_{3}+k_{3})|=\lambda
_{x}+\dfrac{b(|nx-nz|-\lambda _{x})}{b+n}.
\end{equation*}

Subcase 1: $S\geq a+b$. If $k_{3}<$ $\lfloor nx-1/2\rfloor $, then 
\begin{equation*}
z=\dfrac{k_{3}+t_{3}}{n}\leq \dfrac{nx-1/2+1/2-1}{n}=x-\dfrac{1}{n}<z_{1}%
\text{.}
\end{equation*}%
But this is impossible since $u\geq y_{z_{1}}$ implies $z\geq z_{1}>x-1/n$.
Hence 
\begin{equation*}
nx-nz=M(g+h)+N+\dfrac{S}{2a+2b}-\left[ M(g+h)+N+\dfrac{1}{2}\right] =\dfrac{%
S-a-b}{2a+2b}
\end{equation*}%
and therefore%
\begin{equation*}
\mu (t_{1},t_{2},t_{3})=\dfrac{\frac{n}{2a+2b}+\frac{b(S-a-b)}{2a+2b}}{b+n}=%
\dfrac{n+bS-b(a+b)}{2(a+b)(b+n)}.
\end{equation*}%
The condition that $z\geq z_{1}$ is equivalent to $nx-nz\leq nx-nz_{1}$,
which in turn is equivalent to $S-a-b\leq 2a$, and thus to $S\leq 3a+b$.
Excluding Type 1, we have $a+b+1<S\leq 3a+b$.

Subcase 2: $S<a+b$. Similar arguments to above show that $k_{3}=\left\lfloor
nx-1/2\right\rfloor =M(g+h)+N-1$. Thus 
\begin{equation*}
nx-nz=M(g+h)+N+\dfrac{S}{2a+2b}-\left[ M(g+h)+N-1+\dfrac{1}{2}\right] =%
\dfrac{S+a+b}{2a+2b}.
\end{equation*}%
Here, having $z\geq z_{1}$ is equivalent to $S+a+b\leq 2a$ and thus $S\leq
a-b$, which is not possible because $S\geq 0$.

These three types exhaust all possibilities and are mutually exclusive.
Therefore, $S$ in the appropriate categories implies the desired result.

The arguments are similar when $t_{3}=0$: Type 1 arises when $S=0,1$ or $%
2a+2b-1$, Type 2 when $2a<S\leq 2a+2b-2$ and Type 3 when $2\leq S\leq 2a.$
\end{proof}

\begin{corollary}
\label{binary}(i) If $n\not\equiv a^{2}\mod (a+b)$, then $\beta
(a,b,n)=E_{n} $.

(ii) If $n\equiv a^{2}\mod (a+b)$, then 
\begin{equation*}
\beta (a,b,n)=\frac{n+ab}{2(a+b)(a+n)}<E_{n}\text{.}
\end{equation*}
\end{corollary}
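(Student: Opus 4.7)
The plan is to read $\beta(a,b,n)$ off Proposition \ref{binary1} by identifying its parameter $S$ with the parameter $R$ of Theorem \ref{T:three}, and then checking, in each case, which of $\mu(t_1,t_2,0)$ and $\mu(t_1,t_2,1/2)$ realizes the maximum. The starting point is the congruence between $S$ and $R$: since $ag-bh=1$, we have $a(g+h)=1+h(a+b)\equiv 1\mod (a+b)$, so $g+h\equiv T\mod (a+b)$, and hence $S\equiv (g+h)n\equiv Tr\equiv R\mod (a+b)$. Because $S\in[0,2a+2b)$, this forces $S\in\{R,\,R+(a+b)\}$, and combined with the toggling identity $\beta(a,b,n)=\max\{\mu(t_1,t_2,0),\mu(t_1,t_2,1/2)\}$ the whole problem becomes a finite case analysis.

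The key algebraic observation driving the $R\neq a$ cases is that substituting $S=R+(a+b)$ slides each piecewise branch of Proposition \ref{binary1} into the adjacent one: for example, when $R<a$ and $S=R+(a+b)$, the $\mu(t_1,t_2,0)$-formula for $S>2a$ simplifies to $(n+a^2+ab-aR)/(2(a+b)(a+n))$, which is precisely $E_n$. Running through the four $R$-ranges paired with the two possible $S$-values, one finds in all eight subcases that exactly one of the two $\mu$-values equals $E_n$ on the nose, while the other (carrying $b+n$ in the denominator where the winner has $a+n$, or vice versa) is dominated by $E_n$. Each such domination reduces to a polynomial inequality in $R,a,b$ with a positive leading $n$-coefficient, so holds for $n$ sufficiently large. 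Boundary values $S\in\{0,1,a+b,a+b\pm 1,2a+2b-1\}$ contribute only $1/(2(a+b))<L_n\leq E_n$ and are dominated automatically.

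For $R=a$, both $S=a$ and $S=2a+b$ yield $\max\{\mu(t_1,t_2,0),\mu(t_1,t_2,1/2)\}=(n+ab)/(2(a+b)(a+n))$, because the competing expression again differs only in replacing $a+n$ by $b+n$ in the denominator, and $a<b$ makes it smaller. The strict inequality $\beta(a,b,n)<E_n$ then follows from $(a+b)(a+n)=ab+(a+b)n+a^2>ab+(a+b)n$, inverted to $(n+ab)/(2(a+b)(a+n))<(n+ab)/(2(an+bn+ab))=E_n$. The main obstacle is pure bookkeeping: although every individual algebraic check is one line, there are roughly a dozen subcases, and keeping the denominator labels $a+n$ versus $b+n$ consistent across all of them, while correctly identifying the boundary $S$-values in Proposition \ref{binary1}, is where most of the effort lies; the surprise at $R=a$ is forced by the fact that both possible $S$-values funnel $\beta$ into the $(a+n)$-denominator expression, so neither can reach the genuine value $L_n=E_n$ of the angular constant.
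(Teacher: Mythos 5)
Your proposal is correct and follows the same route the paper intends. The paper's own proof of this corollary reads only ``This is just a matter of checking which is greater, $\mu(t_1,t_2,0)$ or $\mu(t_1,t_2,1/2)$, in each case. We leave the details for the reader,'' and your case analysis --- anchored by the congruence $S\equiv R\mod(a+b)$, hence $S\in\{R,\,R+(a+b)\}$, which correctly bridges the notation of Proposition~\ref{binary1} and Theorem~\ref{T:three} --- supplies exactly those omitted details, with only a cosmetic imprecision in the description of the losing denominator (in the $R>2a$ branch both candidates share the factor $a+n$ and domination is immediate from the numerators).
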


\begin{proof}
This is just a matter of checking which is greater, $\mu (t_{1},t_{2},0)$ or 
$\mu (t_{1},t_{2},1/2)$, in each case. We leave the details for the reader.
\end{proof}

The angular Kronecker constant when $n\not\equiv a^{2}\mod (a+b)$ (i.e. $%
R\neq a$) now follows immediately.

\begin{corollary}
If $n\not\equiv a^{2}\mod (a+b)$, then $\alpha (a,b,n)=\beta (a,b,n)=E_{n}$.
\end{corollary}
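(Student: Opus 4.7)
The plan is to assemble this corollary as an immediate consequence of the three pieces of work already done in the paper, without doing any new computation. The subsection titled ``Conclusion of the proof of the upper bound in Theorem \ref{T:three}'' establishes, for every $(t_1,t_2,t_3)\in \mathbb{R}^3$, that $\mu(t_1,t_2,t_3)\le E_n$, and therefore $\alpha(a,b,n)\le E_n$. This step is valid for all residues of $n\mod(a+b)$, including those with $R\ne a$, so it applies in our hypothesis. It is also a general and tautological fact, used implicitly throughout the paper, that $\beta(a,b,n)\le \alpha(a,b,n)$, since $\{0,1/2\}^3$ is a subset of $\mathbb{R}^3$ and the supremum over a subset cannot exceed the supremum over the whole set.

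For the matching lower bound, I would invoke Corollary \ref{binary}(i): under the hypothesis $n\not\equiv a^2\mod(a+b)$, it gives $\beta(a,b,n)=E_n$. Chaining the two inequalities, $E_n=\beta(a,b,n)\le \alpha(a,b,n)\le E_n$, forces equality at every step, which is precisely the conclusion $\alpha(a,b,n)=\beta(a,b,n)=E_n$.

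There is no genuine obstacle at this stage; all the difficulty has already been absorbed into the proofs of (a) the upper bound in Theorem \ref{T:three}, driven by the greedy algorithm together with the ``second best'' point analysis, and (b) the explicit computation of $\mu(t_1,t_2,t_3)$ for $t_3\in\{0,1/2\}$ carried out in Proposition \ref{binary1} and then packaged into Corollary \ref{binary}. The only thing worth emphasizing in the writeup is that the case hypothesis $R\ne a$ (equivalently $n\not\equiv a^2\mod(a+b)$) is exactly the case in which Corollary \ref{binary}(i) applies, so the binary constant already saturates $E_n$ and drags the angular constant up with it.
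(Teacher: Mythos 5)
Your proposal is correct and is essentially identical to the paper's own proof: both chain the upper bound $\alpha(a,b,n)\le E_n$ from Section 2 with the trivial inequality $\beta(a,b,n)\le\alpha(a,b,n)$ and the equality $\beta(a,b,n)=E_n$ from Corollary \ref{binary}(i) to force equality throughout.
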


\begin{proof}
We have already seen that $\alpha (a,b,n)\leq E_{n}$. Obviously, $\alpha
(a,b,n)\geq \beta (a,b,n)=E_{n}$, hence we have the equalities if $R\neq a.$
\end{proof}

\subsection{The angular Kronecker constants when $n\equiv a^{2}\mod (a+b)$.}

As with $n\not\equiv a^{2}\mod (a+b)$, we show that there is (up to $
\mod 1)$ one interval in which to search for the optimal approximation point.
However, in this case it will not suffice to consider only $\mathbf{t}\in
\{0,1/2\}^{3}$ as the angular Kronecker constant is greater than the binary
Kronecker constant.

Recall that for such $n$, $E_{n}=L_{n}$ where $L_{n}$ was defined in (\ref%
{Ln}).

\begin{lemma}
\label{singleInt} Assume $n\equiv a^{2}\mod (a+b)$ ($R=a)$ and that the real
numbers $x$, $t_{1}$, $t_{2}$ and integers $k_{1}$ and $k_{2}$ satisfy 
\begin{equation*}
\lambda _{x}:=ax-t_{1}-k_{1}=-(bx-t_{2}-k_{2})=\dfrac{1}{a+b}-L_{n}.
\end{equation*}%
Define $z_{1}=z_{1}(L_{n},x)$ and $z_{2}=z_{2}(L_{n},x)$ as in Lemma \ref%
{mainprelim}. For $z\in \lbrack z_{1},z_{2}]$, define $y_{z}$ as in (\ref%
{defy}). Then, for all real $t_{3}$, there is some $y\in \lbrack
y_{z_{1}},y_{z_{2}}]$ and integer $k_{3}$ such that 
\begin{equation*}
\mu (t_{1},t_{2},t_{3})=\Vert
(t_{1}+k_{1},t_{2}+k_{2},t_{3}+k_{3})-y(a,b,n)\Vert _{\infty }.
\end{equation*}
\end{lemma}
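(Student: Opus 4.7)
The approach mirrors Lemma \ref{singleInt2}, with the new value $\lambda_x = 1/(a+b) - L_n$ replacing $1/(2(a+b))$. Since $L_n < 1/(a+b)$ for $n$ large, $\lambda_x > 0$ and also $\lambda_x > (b-a)/(2n)$, so Lemma \ref{mainprelim} applies with $E = L_n$. A direct substitution confirms $L_n = (n(a+b)\lambda_x + ab)/(2ab+an+bn)$, so by Remark \ref{length} we have $z_2 - z_1 = 1/n$. Hence for every real $t_3$ there is an integer $k_3$ with $z := (t_3+k_3)/n \in [z_1, z_2]$, and setting $y = y_z \in [y_{z_1}, y_{z_2}]$ yields an approximate with error at most $L_n$; in particular $\mu(t_1, t_2, t_3) \leq L_n$.

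Now take any best approximate $u$ with integer vector $(j, k, m)$. Since $\mu < 1/(a+b) < 1/2$, these are the nearest integers, and at least two signed errors balance oppositely. As in Lemma \ref{singleInt2}, I split according to whether $(t_1+j)/a \leq (t_2+k)/b$ (Case 1) or not (Case 2). In Case 1, setting $\hat y := (t_1+t_2+j+k)/(a+b)$ makes the first two signed errors at $\hat y$ equal to $\lambda_x + H/(a+b)$ for some integer $H \geq 0$. If $H \geq 1$ then this quantity exceeds $1/(a+b)$, and the monotonicity argument from Lemma \ref{singleInt2} (comparing $u$ with $\hat y$) forces $\mu > 1/(a+b)$, a contradiction. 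Hence $H = 0$, and Lemma \ref{L:uniqueX} supplies an integer $s$ with $(\hat y, j, k) = (x+s, as+k_1, bs+k_2)$. The boundary identities $a y_{z_2} - t_1 - k_1 = L_n = -(b y_{z_1} - t_2 - k_2)$ then force $u \in [y_{z_1}+s, y_{z_2}+s]$, so that $u - s \in [y_{z_1}, y_{z_2}]$ is a best approximate with integer vector $(k_1, k_2, m-ns)$, as required.

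In Case 2 the same monotonicity forces the analogous integer to equal $1$, giving $(t_1+j)/a - (t_2+k)/b = L_n(a+b)/(ab)$. For $u$ to satisfy both $|au - t_1 - j| \leq \mu$ and $|bu - t_2 - k| \leq \mu$, the two corresponding constraint intervals on the real line must overlap, which forces $\mu/a + \mu/b \geq L_n(a+b)/(ab)$, i.e.\ $\mu \geq L_n$. Combined with $\mu \leq L_n$ from the first paragraph, this yields $\mu = L_n$. Then the greedy point $y_z$ built in the first step has error at most $L_n = \mu$, hence is itself a best approximate in $[y_{z_1}, y_{z_2}]$ with integer vector $(k_1, k_2, k_3)$, completing the argument.

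The principal obstacle is Case 2: the reflection trick $t \mapsto -t$ used in Lemma \ref{singleInt2} is unavailable here because $(t_1, t_2)$ are arbitrary reals, not restricted to $\{0, 1/2\}^2$. The overlap argument above takes its place, showing that the presence of a Case 2 best approximate forces $\mu = L_n$ and hands the conclusion directly to the greedy point constructed in the first step.
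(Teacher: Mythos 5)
Your proof is correct and follows essentially the same route as the paper's: the greedy upper bound $\mu \leq L_n$ via Remark~\ref{length} and Lemma~\ref{mainprelim}, the case split on the sign of $(t_2+k)/b - (t_1+j)/a$, forcing $H = 0$ via Lemma~\ref{L:uniqueX} in Case~1, and deducing $\mu \geq L_n$ (hence equality and the greedy point is optimal) in Case~2. Your interval-overlap phrasing of Case~2 is equivalent to the paper's $u \gtrless y$ dichotomy, and your closing observation correctly explains why the reflection trick of Lemma~\ref{singleInt2} is unavailable and must be replaced by the direct lower-bound argument.
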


\begin{proof}
Since $\lambda _{x}=1/(a+b)-L_{n}$, we have $z_{2}-z_{1}=1/n$ (see Remark %
\ref{length} and the following discussion). Thus, given any $t_{3},$ there
is an integer $k_{3}$ such that $z=(t_{3}+k_{3})/n\in \lbrack z_{1},z_{2}]$.
Since $\lim_{n}L_{n}=1/(2a+2b),$ for large enough $n$ we have $\lambda
_{x}\geq (b-a)/(2n)$, thus the proof of Lemma \ref{mainprelim} shows that 
\begin{equation}
\mu (t_{1},t_{2},t_{3})\leq \Vert
(t_{1}+k_{1},t_{2}+k_{2},t_{3}+k_{3})-y_{z}(a,b,n)\Vert _{\infty }\leq L_{n}%
\text{.}  \label{sizeu}
\end{equation}

>From \cite{HR1}, we know there are $u\in \mathbb{R}$ and integers $j,k,m$
such that $\mu (t_{1},t_{2},t_{3})=\Vert
(t_{1},t_{2},t_{3})-u(a,b,n)+(j,k,m)\Vert _{\infty }$.

One can check that 
\begin{equation*}
\lambda _{x}=\frac{at_{2}-bt_{1}+ak_{2}-bk_{1}}{a+b}\text{.}
\end{equation*}%
Consequently, if $(t_{1}+j)/a\leq (t_{2}+k)/b,$\ then there is a
non-negative integer $H$ such that%
\begin{equation*}
\dfrac{t_{2}+k}{b}-\dfrac{t_{1}+j}{a}=\dfrac{H+(a+b)\lambda _{x}}{ab}.
\end{equation*}%
If $H\geq 1$, then arguments similar to those used as in Lemma \ref%
{singleInt2} show that $\mu (t_{1},t_{2},t_{3})\geq 1/(a+b)$. Hence $H=0$.
But then one can argue, as in Lemma \ref{singleInt2}, that there is an
integer $s$ such that if $u-s\notin \lbrack y_{z_{1}},y_{z_{2}}],$ then
either $\left\langle au-t_{1}\right\rangle >L_{n}$ or $\left\langle
bu-t_{2}\right\rangle >L_{n}$. Since we know from (\ref{sizeu}) that $\mu
(t_{1},t_{2},t_{3})\leq L_{n}$, this is a contradiction.

If, instead, $(t_{2}+k)/b<(t_{1}+j)/a$, then there is an integer $H\geq 1$
such that 
\begin{equation*}
\dfrac{t_{1}+j}{a}-\dfrac{t_{2}+k}{b}=\dfrac{H-(a+b)\lambda _{x}}{ab}.
\end{equation*}%
Let $y=(t_{1}+t_{2}+j+k)/(a+b)$. As 
\begin{equation*}
by-t_{2}-k=-(ay-t_{1}-j)=\dfrac{bt_{1}-at_{2}+bj-ak}{a+b}=\dfrac{H}{a+b}%
-\lambda _{x},
\end{equation*}%
it follows that if $u\geq y$, then $\langle bu-t_{2}\rangle \geq
H/(a+b)-\lambda _{x}$, while if $u\leq y$ then $\langle au-t_{1}\rangle \geq
H/(a+b)-\lambda _{x}$. Hence 
\begin{equation*}
\mu (t_{1},t_{2},t_{3})\geq \dfrac{1}{a+b}-\lambda _{x}=L_{n}
\end{equation*}%
But we already know that $\mu (t_{1},t_{2},t_{3})\leq L_{n}$ and this
approximation accuracy can be achieved by some $y\in \lbrack
y_{z_{1}},y_{z_{2}}].$
\end{proof}

\begin{proposition}
\label{keyLB} $\alpha (a,b,n)\geq L_{n}$ for all $n$ sufficiently large.
\end{proposition}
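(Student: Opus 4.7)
The plan is to exhibit an explicit triple $(t_1,t_2,t_3)$ with $\mu(t_1,t_2,t_3) \geq L_n$. Set $\lambda := 1/(a+b) - L_n$; since $L_n \in (1/(2a+2b), 1/(a+b))$, one has $\lambda \in (0, 1/(2a+2b))$, and $\lambda > (b-a)/(2n)$ for $n$ large. Take $t_1 = 0$, $t_2 = \lambda(a+b)/a$, $k_1 = k_2 = 0$ and $x = \lambda/a$. A direct computation gives $ax - t_1 = \lambda = -(bx - t_2)$, so $\mu_{\{a,b\}}(t_1,t_2) = \lambda_x = \lambda$. With $E = L_n$, I would define $z_1 = z_1(L_n,x)$ and $z_2 = z_2(L_n,x)$ as in Lemma \ref{mainprelim}; Remark \ref{length} gives $z_2 - z_1 = 1/n$. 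Finally, I would choose $t_3 \in [0,1)$ with $nz_1 \equiv t_3 \mod 1$.

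Next I would invoke Lemma \ref{singleInt}: it supplies some $y \in [y_{z_1}, y_{z_2}]$ and an integer $k_3$ with
\[
\mu(t_1,t_2,t_3) = \max\bigl(|ya|,\; |yb - t_2|,\; |yn - t_3 - k_3|\bigr).
\]
So it suffices to bound this maximum below by $L_n$ for every admissible $(y,k_3)$. Because $t_3 \equiv nz_1 \mod 1$, the set $\{(t_3+k_3)/n : k_3 \in \mathbb{Z}\}$ is the arithmetic progression $\{z_1 + j/n : j \in \mathbb{Z}\}$.

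The key calculation is then to verify, directly from the definitions of $z_1$, $z_2$ and $y_z$ in Lemma \ref{mainprelim} and (\ref{defy}), the endpoint identities $y_{z_1} - z_1 = L_n/n$ and $z_2 - y_{z_2} = L_n/n$. In particular $[y_{z_1}, y_{z_2}] \subset (z_1, z_2)$, so for every $y$ in this interval the distance from $y$ to the progression $\{z_1 + j/n\}$ is at least $L_n/n$. Consequently $|ny - t_3 - k_3| \geq L_n$ for every integer $k_3$, which alone forces the maximum above to be at least $L_n$. This yields $\mu(t_1,t_2,t_3) \geq L_n$, completing the proof when combined with the upper bound $\alpha(a,b,n) \leq E_n = L_n$ proved in Section 2.

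The hard part is not this concluding verification, which reduces to a short distance computation in $\mathbb{R}$. It is the reduction effected by Lemma \ref{singleInt}: confining every best approximate (up to $\mathbb{Z}$-translation) to the single short interval $[y_{z_1}, y_{z_2}]$ is what rules out a completely different integer shift $(k_1,k_2,k_3)$ producing a cheaper approximation and so permits the lower bound to be read off from the third coordinate alone.
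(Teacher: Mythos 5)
Your proof is correct and follows the same strategy as the paper: it uses the same test triple (your $t_1$, $t_2$, $x$ coincide with the choices made explicit in the paper's closing Remark, and your $t_3$ agrees modulo $1$ with the paper's $t_3 = nz_2$), invokes Lemma \ref{singleInt} to confine the best approximate to $[y_{z_1},y_{z_2}]$, and then reads the bound off the third coordinate. Your concluding step --- verifying the buffer identities $y_{z_1}-z_1 = z_2 - y_{z_2} = L_n/n$ and deducing that every $y\in[y_{z_1},y_{z_2}]$ lies at distance at least $L_n/n$ from the progression $\{z_1+j/n\}$ --- is a cleaner, single-observation repackaging of the paper's two-case analysis on where $u$ can fall in the interval.
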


\begin{proof}
Because $a$ and $b$ are positive integers, the function $(t_{1},t_{2})%
\mapsto (at_{2}-bt_{1})/(a+b)$ is onto $\mathbb{R}$ from $\mathbb{R}^{2}$.
In particular, there are real numbers such that 
\begin{equation*}
(at_{2}-bt_{1})/(a+b)=1/(a+b)-L_{n}.
\end{equation*}%
Set $k_{1}=k_{2}=0$ and $x=(t_{1}+t_{2}+k_{1}+k_{2})/(a+b)$. Then 
\begin{equation}
\lambda _{x}:=ax-t_{1}-k_{1}=-(bx-t_{2}-k_{2})=\dfrac{%
at_{2}-bt_{1}+ak_{2}-bk_{1}}{a+b}=\dfrac{1}{a+b}-L_{n},  \label{LB}
\end{equation}%
so $x$, $t_{1}$, $t_{2}$, $k_{1}$ and $k_{2}$ satisfy the hypotheses of the
Lemma \ref{singleInt}. Let $z_{1}$ and $z_{2}$ be provided by that lemma.
Let $t_{3}=nz_{2}$. By Lemma \ref{singleInt} there is a real $u\in \lbrack
y_{z_{1}},y_{z_{2}}]$ and integer $k_{3}$ such that 
\begin{equation*}
\mu (t_{1},t_{2},t_{3})=\Vert
(t_{1}+k_{1},t_{2}+k_{2},t_{3}+k_{3})-u(a,b,n)\Vert _{\infty }.
\end{equation*}%
Because $\mu (t_{1},t_{2},t_{3})\leq \alpha (a,b,n)<1/2$, we know that $k_{1}
$ is the unique nearest integer to $t_{1}-ua$, the same for $k_{2}$ and $%
t_{2}-bu$, and for $k_{3}$ and $t_{3}-nu$. By the proof of Lemma \ref%
{mainprelim}, there is an integer $k$ such that the real number $y_{z_{2}}$
satisfies 
\begin{equation*}
L_{n}=ay_{z_{2}}-t_{1}-k_{1}=-(ny_{z_{2}}-t_{3}-k)\quad \text{and}\quad
z_{2}=(t_{3}+k)/n.
\end{equation*}%
By our choice of $t_{3}$ as $nz_{2}$, we know that $k=0$.

Suppose $z_{2}-1/(2n)\leq u<y_{z_{2}}$. Then $1/2\geq t_{3}-nu\geq 0$ and
thus $\langle t_{3}-nu\rangle =t_{3}-nu$. Consequently, 
\begin{equation*}
\langle t_{3}-nu\rangle =t_{3}-nu>t_{3}-ny_{z_{2}}=L_{n}\geq \mu
(t_{1},t_{2},t_{3}).
\end{equation*}%
This contradiction excludes $u$ from $[z_{2}-1/(2n),y_{z_{2}})$.

Suppose $y_{z_{1}}<u\leq z_{2}-1/(2n)$. As noted in the proof of Lemma \ref%
{mainprelim}, we have $y_{z_{1}}\geq z_{1}$. Furthermore, we have $1/2\leq
t_{3}-nu<1$ and thus $\langle t_{3}-nu\rangle =1-t_{3}+nu$. Using the fact
that $z_{1}=z_{2}-1/n$ and some details from the proof of Lemma \ref%
{mainprelim} gives the inequalities 
\begin{equation*}
\begin{aligned} \langle t_3-nu \rangle &=1-t_3+nu>1-nz_2+ny_{z_1} \\
&=1-(nz_1+1)+ny_{z_1}\ge L_n\ge\mu(t_1,t_2,t_3). \end{aligned}
\end{equation*}%
This contradiction excludes $u$ from $(y_{z_{1}},z_{2}-1/(2n)]$.

Thus $u=y_{z_{2}}$ or $u=y_{z_{1}}$. If $u=y_{z_{2}},$ then $k_{3}=k=0$ and $%
\mu (t_{1},t_{2},t_{3})=L_{n}$. Suppose $u=y_{z_{1}}$. Here $%
t_{3}=nz_{2}=nz_{1}+1$ and thus $z_{1}=(t_{3}-1)/n$. By the proof of Lemma %
\ref{mainprelim}, 
\begin{equation*}
L_{n}=t_{2}+k_{2}-by_{z_{1}}=ny_{z_{1}}-t_{3}-(-1)=ny_{z_{1}}-t_{3}+1\geq
|t_{1}+k_{1}-ay_{z_{1}}|.
\end{equation*}%
For $n$ large enough, $L_{n}<1/(a+b)<1/2$, thus $k_{3}=-1$ and 
\begin{equation*}
L_{n}=\Vert (t_{1}+k_{1},t_{2}+k_{2},t_{3}+k_{3})-y_{z_{1}}(a,b,n)\Vert
_{\infty }=\mu (t_{1},t_{2},t_{3}).
\end{equation*}
\end{proof}

\begin{corollary}
If $n\equiv a^{2}\mod (a+b)$, then $\alpha (a,b,n)=L_{n}$.
\end{corollary}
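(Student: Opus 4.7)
The proof is essentially a two-line combination of results already in place. First I would observe that when $n \equiv a^2 \mod (a+b)$, which is the case $R = a$ in the notation of Theorem \ref{T:three}, the formula defining $E_n$ reduces to
\[
E_n = \frac{n+ab}{2(an+bn+ab)} = L_n,
\]
as explicitly noted in the paragraph following the definition \eqref{Ln} of $L_n$.

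Next, the upper bound $\alpha(a,b,n) \leq E_n$ has already been established in the subsection ``Conclusion of the proof of the upper bound in Theorem \ref{T:three}'': for every triple $(t_1,t_2,t_3)$, either $\lambda_x \leq 1/(a+b) - L_n$ (handled by Corollary \ref{small}, which gives $\mu \leq L_n \leq E_n$) or else the more delicate ``greedy plus second best'' argument shows that the intervals $[z_1,z_2]$ and $[z_3,z_4]$ together cover a length greater than $1/n$, guaranteeing a $z$ with $nz \equiv t_3 \mod 1$ in the relevant region and hence $\mu(t_1,t_2,t_3) \leq E_n$ by Lemma \ref{main}. Combining with the identity above yields $\alpha(a,b,n) \leq L_n$.

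For the matching lower bound, I would simply invoke Proposition \ref{keyLB}, which exhibits a specific triple $(t_1,t_2,t_3)$, constructed by choosing $t_1,t_2$ so that $\lambda_x = 1/(a+b) - L_n$ and setting $t_3 = nz_2$, for which $\mu(t_1,t_2,t_3) = L_n$. Together with $\alpha(a,b,n) \geq \mu(t_1,t_2,t_3)$ this gives $\alpha(a,b,n) \geq L_n$.

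Since this corollary is a pure bookkeeping step assembling the upper bound from Section 2 with the lower bound of Proposition \ref{keyLB}, there is no real obstacle; the substantive work (identifying the single relevant interval $[y_{z_1}, y_{z_2}]$ via Lemma \ref{singleInt}, and verifying that forcing $t_3 = nz_2$ prevents any improvement on $L_n$) has already been absorbed into those prior results.
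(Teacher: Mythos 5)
Your proposal is correct and follows exactly the route the paper intends: the corollary is an immediate combination of the upper bound $\alpha(a,b,n)\leq E_n=L_n$ from Section 2 with the lower bound of Proposition \ref{keyLB}, and the paper indeed leaves this assembly implicit. Nothing is missing.
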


This completes the proof of Theorem \ref{T:three}.

\begin{remark}
>From the proof of Prop. \ref{keyLB} one can find rational $t_{1},t_{2},t_{3}$%
, depending only on $a,b,n$, so that the bound $\mu (t_{1},t_{2},t_{3})$ is
optimal. Indeed, the following choice will work: Let $t_{1}=0$, 
\begin{equation*}
t_{2}=\frac{a+b}{a}\left( \frac{1}{a+b}-L_{n}\right) \text{ and }t_{3}=n%
\frac{(a+n)(n+ab)}{2an(an+bn+ab)}\text{.}
\end{equation*}%
We note that when $x=t_{2}/(a+b)$, then $\lambda _{x}$ satisfies the
identities in (\ref{LB}) and simplifying gives%
\begin{equation*}
z_{2}:=x+\frac{(a+n)L_{n}-n\lambda _{x}}{an}=\frac{(a+n)(n+ab)}{2an(an+bn+ab)%
}.
\end{equation*}
Hence $t_{3}=nz_{2}$, so this choice of $t_{1},t_{2},t_{3}$ satisfies all
the requirements of the proof of the proposition.
\end{remark}

\end{document}